\numberwithin{equation}{section}
\theoremstyle{plain}
\newtheorem{theorem}[subsection]{Theorem}
\newtheorem{proposition}[subsection]{Proposition}
\newtheorem{lemma}[subsection]{Lemma}
\newtheorem{corollary}[subsection]{Corollary}
\newtheorem{conjecture}[subsection]{Conjecture}
\theoremstyle{definition}
\newtheorem{definition}[subsection]{Definition}
\newtheorem{remark}[subsection]{Remark}
\newtheorem{example}[subsection]{Example}
\newcommand\eps{\varepsilon}
\newcommand\R{{\mathbf{R}}}
\renewcommand\P{{\mathbb{P}}}
\newcommand{\Z}{{\mathbf{Z}}}
\newcommand\E{{\mathbb{E}}}
\newcommand\A{{\mathbb{A}}}
\newcommand\ra{\rightarrow}
\newcommand{\beq}{\begin{equation*}}
\newcommand{\eeq}{\end{equation*}}
\newcommand{\Aut}{\mbox{Aut}}
\newcommand{\Gr}{\operatorname{Gr}}
\newcommand{\ic}{\mathfrak}
\newcommand{\PP}{\mathcal{P}}
\begin{document}

\title[Kakeya for algebraic varieties over finite fields]{The Kakeya set and maximal conjectures for algebraic varieties over finite fields}

\author{Jordan S. Ellenberg}
\address{Department of Mathematics, University of Wisconsin, Madison WI 53706}
\email{ellenber@math.wisc.edu}

\author{Richard Oberlin}
\address{UCLA Department of Mathematics, Los Angeles, CA 90095-1555}
\email{oberlin@math.ucla.edu}

\author{Terence Tao}
\address{UCLA Department of Mathematics, Los Angeles, CA 90095-1555}
\email{tao@math.ucla.edu}

\subjclass{42B25; 11G25; 51E20}

\begin{abstract}  Using the polynomial method of Dvir \cite{dvir}, we establish optimal estimates for Kakeya sets and Kakeya maximal functions associated to algebraic varieties $W$ over finite fields $F$.  For instance, given an $n-1$-dimensional projective variety $W \subset \P^n(F)$, we establish the Kakeya maximal estimate
$$
\| \sup_{\gamma \ni w} \sum_{v \in \gamma(F)} |f(v)| \|_{\ell^n(W)} \leq C_{n,W,d} |F|^{(n-1)/n} \|f\|_{\ell^n(F^n)}$$
for all functions $f: F^n \to \R$ and $d \geq 1$, where for each $w \in W$, the supremum is over all irreducible algebraic curves in $F^n$ of degree at most $d$ that pass through $w$ but do not lie in $W$, and with $C_{n,W,d}$ depending only on $n, d$ and the degree of $W$; the special case when $W$ is the hyperplane at infinity in particular establishes the Kakeya maximal function conjecture in finite fields, which in turn strengthens the results of Dvir.
\end{abstract}

\maketitle
\today

%%%%%%%%%%%%%%%%%%%%%%%%%%%%%%%%%%%%%%%%%%%%%%%%%
\section{Introduction}

Recently, Dvir\cite{dvir} established the following result, first conjectured by Wolff\cite{wolff:survey}:

\begin{theorem}[Kakeya set conjecture for $F^n$]\label{dvir-thm}  Let $n \geq 1$, let $F$ be a finite field, and let $E \subset F^n$ be a subset of $F^n$ that contains a line in every direction.  Then $|E| \geq c_n |F|^n$, where $c_n > 0$ depends only on $n$.
\end{theorem}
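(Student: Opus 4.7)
My plan is to follow the polynomial method introduced by Dvir: assume $|E|$ is small, interpolate a low-degree polynomial vanishing on $E$, and use the line-in-every-direction hypothesis to force the top-degree part of this polynomial to vanish everywhere, contradicting a degree count.

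The first step is a dimension count. The space $V_d$ of polynomials in $F[x_1,\dots,x_n]$ of degree at most $d$ has $F$-dimension $\binom{n+d}{n}$. Evaluation at a point of $E$ is an $F$-linear functional on $V_d$, so if $|E| < \binom{n+d}{n}$ there exists a nonzero $P \in V_d$ vanishing on $E$. The right choice of $d$ will be $d = |F|-1$, which makes $\binom{n+d}{n} \sim |F|^n/n!$ and gives the desired bound.

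The second step uses the line-in-every-direction assumption to control the top-degree homogeneous part $P_{d'}$ of $P$, where $d' \le d = |F|-1$ is the actual degree of $P$. For each direction $v \in F^n\setminus\{0\}$, the hypothesis supplies $x \in F^n$ with $\{x+tv : t \in F\} \subset E$; then $t \mapsto P(x+tv)$ is a polynomial in $t$ of degree at most $d' < |F|$ that vanishes at all $|F|$ points of $F$, and so is identically zero. Its $t^{d'}$ coefficient is precisely $P_{d'}(v)$, so $P_{d'}$ vanishes on $F^n\setminus\{0\}$, and since $P_{d'}$ is homogeneous of positive degree it also vanishes at $0$.

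The final step is to reach a contradiction. Since $P_{d'}$ is a polynomial of degree $d' \le |F|-1$ vanishing on all of $F^n$, an easy induction on $n$ (or the Schwartz–Zippel / Combinatorial Nullstellensatz lemma) forces $P_{d'} \equiv 0$, contradicting the definition of $d'$ as the degree of $P$. Hence no nonzero $P \in V_d$ vanishing on $E$ exists, which gives $|E| \ge \binom{n+|F|-1}{n} \ge c_n |F|^n$ with $c_n = 1/n!$. The only subtle point, and the place that distinguishes this approach from earlier combinatorial attempts, is the interplay between the dimension count and the degree: the count must yield a polynomial of degree strictly less than $|F|$, so the single-variable vanishing lemma can be applied along each line; the main obstacle in discovering such a proof is precisely the realization that requiring $d < |F|$ forces $d \asymp |F|$, which is exactly the regime where dimension counting gives the sharp $|F|^n$ bound.
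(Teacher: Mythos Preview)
Your argument is correct: this is precisely Dvir's original polynomial method, and it yields the explicit constant $c_n = 1/n!$ that the paper attributes to \cite{dvir}. The paper, however, does not prove Theorem~\ref{dvir-thm} directly. Instead it establishes the much stronger Kakeya maximal function estimate (Theorem~\ref{max-thm}, and more generally Theorem~\ref{alg-thm}) and then observes that Theorem~\ref{dvir-thm} follows immediately by specializing $f = 1_E$: the left-hand side of \eqref{exp} is then at least $|F|\cdot|\P^{n-1}(F)|^{1/n} \asymp |F|^{(2n-1)/n}$, while the right-hand side is $C_n|F|^{(n-1)/n}|E|^{1/n}$, giving $|E| \gg |F|^n$. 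Your direct route is simpler and self-contained; the paper's route buys the full maximal inequality, from which the set bound is an immediate corollary (but not conversely). The polynomial method you used is in fact the engine behind the paper's argument as well --- see the end of Section~\ref{flat-sec}, where a weighted version of the same interpolation-and-Bezout idea drives the proof of Proposition~\ref{dist}.
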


\begin{remark} Dvir's original argument gave the value $c_n = 1/n!$ for the constant $c_n$; this was recently improved in \cite{saraf}, \cite{dvir2} to $c_n = (\frac{1}{2} + o(1))^n$, which is best possible except for possible refinement of the $o(1)$ error.
\end{remark}

This result is the finite field analogue of the \emph{Kakeya set conjecture for $\R^n$}, which is of importance in harmonic analysis; we refer the reader to \cite{wolff:survey}, \cite{gerd:kakeya} for further discussion.  In this paper we shall obtain a stronger version of Theorem \ref{dvir-thm}, the finite field analogue of an analogous conjecture in $\R^n$ (see e.g. \cite{borg:kakeya}, \cite{wolff:survey}.)  This answers a question raised in \cite{gerd:kakeya}.

\begin{theorem}[Kakeya maximal conjecture for $F^n$]\label{max-thm}  Let $n \geq 1$, let $F$ be a finite field, and let $f: F^n \to \R$ be a function.  Then
\begin{equation}\label{exp}
\| \sup_{\gamma // \omega} \sum_{x \in \gamma} |f(x)| \|_{\ell^n(\P^{n-1}(F))} \leq C_n |F|^{(n-1)/n} \|f\|_{\ell^n(F^n)},
\end{equation}
\end{theorem}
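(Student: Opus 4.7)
The plan is to apply Dvir's polynomial method, enhanced with the method of multiplicities, to first establish the restricted strong-type $(n,n)$ estimate for characteristic functions, and then extend to general $f$. After reducing to $f \geq 0$, I focus on the case $f = 1_E$.

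The key quantitative input is the level-set bound
\[
|\Omega_\lambda| \leq C_n |E| |F|^{n-2}/\lambda^{n-1}, \qquad \Omega_\lambda := \{\omega \in \P^{n-1}(F) : \max_{\gamma // \omega} |\gamma \cap E| \geq \lambda\}.
\]
To prove this, for each $\omega \in \Omega_\lambda$ fix a witnessing line $\gamma_\omega$ with $|\gamma_\omega \cap E| \geq \lambda$, take a multiplicity parameter $s$ of order $|E|/\lambda^n$, and produce (by a dimension count) a nonzero polynomial $P \in F[x_1,\ldots,x_n]$ of degree $d \lesssim_n s^{(n-1)/n}|E|^{1/n}$ that vanishes to order $s$ at every point of $E$ in the sense of Hasse derivatives; existence follows from $\binom{d+n}{n} > |E| \binom{s+n-1}{n-1}$. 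The restriction $P|_{\gamma_\omega}$ is then a univariate polynomial of degree $\leq d$ vanishing to total order $\geq \lambda s > d$, hence identically zero. Parameterizing $\gamma_\omega$ as $t \mapsto x_0 + t\omega$ and examining the leading coefficient shows that the top homogeneous part $P_{\deg P}$ vanishes at $\omega$; by Schwartz--Zippel in $\P^{n-1}(F)$ this forces $|\Omega_\lambda| \leq C_n (\deg P) |F|^{n-2}$, whence the claim. The crucial feature is that $\lambda^{n-1}|\Omega_\lambda|$ is uniformly bounded, so that the layer-cake formula $\sum_\omega K1_E(\omega)^n = n\int_0^{|F|}\lambda^{n-1}|\Omega_\lambda|\,d\lambda$ (combined with the trivial bound $|\Omega_\lambda|\leq |F|^{n-1}$ for small $\lambda$, and the fact that $K1_E \leq |F|$) yields the restricted strong-type estimate $\|K1_E\|_{\ell^n}^n \leq C_n' |E||F|^{n-1}$ \emph{without} any logarithmic loss, since the integrand is uniformly bounded and the range of integration is finite.

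The final step is to upgrade from indicators to general $f$. The naive dyadic decomposition $f \approx \sum_j 2^j 1_{E_j}$ together with the $\ell^n$ triangle inequality costs a factor of $(\log|F|)^{(n-1)/n}$, and eliminating this logarithmic loss is the main obstacle. My approach is to run the polynomial method directly on $f$ with pointwise multiplicities $m(x)$ depending on $f(x)$ (for instance $m(x) \propto f(x)^{n/(n-1)}$), chosen so that the vanishing count $\sum_{x \in \gamma_\omega} m(x)$ can be bounded below in terms of $Kf(\omega)$ via Hölder, while the dimension count $\binom{d+n}{n} \gtrsim \sum_x m(x)^{n-1}$ produces $\|f\|_{\ell^n}^n$ on the right-hand side; the resulting level-set estimate should again have the feature that $\lambda^{n-1}|\Omega_\lambda|$ is uniformly bounded (now in terms of $|F|^{n-2}\|f\|_{\ell^n}^n$), so that the same layer-cake argument delivers the strong-type inequality without any logarithmic loss.
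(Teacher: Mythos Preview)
There is a genuine gap, and it originates in your dimension count. Vanishing to order $s$ at a point of $F^n$ imposes $\binom{s+n-1}{n}\asymp s^n$ linear conditions (the number of monomials of degree $<s$ in $n$ variables), not $\binom{s+n-1}{n-1}\asymp s^{n-1}$. With the correct count, a nonzero $P$ vanishing to order $s$ on $E$ must have degree $d\asymp s|E|^{1/n}$, not $s^{(n-1)/n}|E|^{1/n}$. The threshold for forcing $P|_{\gamma_\omega}\equiv 0$ becomes $\lambda s>d\asymp s|E|^{1/n}$, so that $s$ cancels entirely and one only obtains $|\Omega_\lambda|\ll d|F|^{n-2}\asymp |E|^{1/n}|F|^{n-2}$ for $\lambda>C|E|^{1/n}$, with \emph{no} dependence on $\lambda$ on the right. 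Inserting this into the layer-cake formula yields $\|K1_E\|_{\ell^n}^n\ll |E|^{1/n}|F|^{2n-2}$, which is far too weak unless $|E|\asymp|F|^n$. (A single point $E=\{p\}$ already shows that the bound $|\Omega_\lambda|\ll |E||F|^{n-2}/\lambda^{n-1}$ is simply false for small $\lambda$.) The same miscount invalidates your variable-multiplicity scheme for general $f$: with $m(x)\propto f(x)^{n/(n-1)}$ the correct condition count is $\sum_x m(x)^n\asymp\sum_x f(x)^{n^2/(n-1)}$, not $\|f\|_{\ell^n}^n$; and with the natural choice $m(x)=f(x)$ one again gets only $|\Omega_\lambda|\ll\|f\|_{\ell^n}|F|^{n-2}$ for $\lambda>C\|f\|_{\ell^n}$.

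The paper obtains the missing $\lambda^{n-1}$ dependence by a mechanism that is not visible from the polynomial method alone. The polynomial argument (with multiplicities $m(v)=\lfloor f(v)\rfloor$) yields the distributional bound only at the single scale $\lambda\asymp\|f\|_{\ell^n}$; a Nikishin--Maurey--Pisier--Stein factorization step then exploits the translation invariance of $F^{n-1}$ to bootstrap this to the full range $K\|f\|_{\ell^n}\le\lambda\le|F|$. Concretely, one randomly superimposes $M\asymp(\lambda/\|f\|_{\ell^n})^n$ translates of $f$ to produce $f_M$ with $\|f_M\|_{\ell^n}\asymp\lambda$, applies the single-scale bound to $f_M$, and reads off the desired estimate for $f$. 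After that, a real-interpolation argument (decomposing $f$ into pieces of essentially constant value, with geometrically decreasing weights) converts the distributional bound into the strong-type $\ell^n\to\ell^n$ inequality without logarithmic loss. Your proposal is missing an analogue of this factorization step; the polynomial method by itself does not supply it.
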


In the statement of Theorem~\ref{max-thm} and throughout the paper, we use the following notation:
\begin{itemize}
\item $\P^{n-1}(F)$ is the $n-1$-dimensional projective space over $F$, and a point $\omega$ of $\P^{n-1}(F)$ is interpreted as a direction in $F^n$;
\item $\ell^p$ denotes the usual family of Lebesgue norms
\beq
\| G \|_{\ell^p(A)} := (\sum_{\omega \in {A}} |G(\omega)|^p)^{1/p}
\eeq
with the usual modification for $p=\infty$.
\end{itemize}

The supremum on the left-hand side of \eqref{exp} is over all lines $\gamma$ with direction $\omega$, and $C_n$ is a constant depending only on $n$.  In general, the bounds in this paper will all involve unspecified constants depending on geometric quantities like $n$ or the degree and dimension of various fixed algebraic varieties; the point is that these constants are independent of $|F|$.

\begin{remark} By interpolating \eqref{exp} with more trivial estimates we obtain
\begin{equation}\label{shoop}
\| \sup_{\gamma // \omega} \sum_{x \in \gamma} |f(x)| \|_{\ell^q(\P^{n-1}(F))} \leq C_n |F|^{(n-1)/q} \|f\|_{\ell^p(F^n)}
\end{equation}
whenever $p$ and $q$ are positive real numbers satisfying $1 \leq p \leq n$ and $1 \leq q \leq (n-1) \frac{p}{p-1}$.  Various special cases of \eqref{shoop} had been established previously in the literature; see \cite{gerd:kakeya}.  Except for the issue of determining the constant $C_n$, the various exponents in \eqref{exp} or \eqref{shoop} are sharp, as can be seen by testing \eqref{exp} with $f$ equal to the indicator of a point, a line, or the whole space $F^n$. 
\end{remark}

\begin{remark}  It would be interesting to investigate the question of which functions $f$ make the inequality in Theorem~\ref{max-thm} a near-equality.  The indicator function of a linear subspace always yields an equality in \eqref{exp}; when $n$ is large, does approximate equality in \eqref{exp} indicate that $f$ is disproportionately concentrated on a linear subspace of $F^n$?  On the other hand, the dual estimate
$$ \| \sum_{\omega \in \P^{n-1}(F)} g(\omega) 1_{\gamma_\omega} \|_{\ell^{n/(n-1)}(F^n)} \leq C_n |F|^{(n-1)/n} \| g \|_{\ell^{n/(n-1)}(\P^{n-1}(F))},$$
where $\gamma_\omega$ is a line with direction $\omega$ for each $\omega \in \P^{n-1}(F)$, is essentially an equality for any non-negative $g$, thanks to the elementary inequality
$$ \| \sum_{\omega \in \Omega} f_\omega \|_{\ell^p(A)} \geq (\sum_{\omega \in \Omega} \| f_\omega \|_{\ell^p(A)}^p)^{1/p}$$
for all $1 \leq p < \infty$ and non-negative functions $f_\omega: A \to \R^+$.  (We will not use this dual formulation elsewhere in this paper.)
\end{remark}

Theorem \ref{dvir-thm} follows immediately from Theorem \ref{max-thm} by specializing $f$ to equal the indicator function $f=1_E$ of the set $E$ in Theorem \ref{dvir-thm}.   

In fact, the goal of this paper is to establish a still more general statement than Theorem~\ref{max-thm}, in which $\P^{n-1}(F)$ is replaced by the $F$-rational points of an essentially arbitrary $n-1$-dimensional algebraic variety of bounded degree.  (See Appendix \ref{alg} for a review of the relevant terminology from algebraic geometry).

\begin{theorem}[Kakeya maximal conjecture for algebraic varieties over $F$]\label{alg-thm}  Let $N, d, n \geq 1$, let $F$ be a finite field, let $V = \P^N$ (resp. $V = \A^N$), and let $W  \subset V$ be a projective (resp. affine) variety of dimension $n-1$ and degree at most $d$.  Then for any $f: V(F) \to \R$ we have
\begin{equation}\label{kakeq}
\| \sup_{\gamma \ni w} \sum_{v \in \gamma(F)} |f(v)| \|_{\ell^n(W(F))} \leq C_{N,d,n} |F|^{(n-1)/n} \|f\|_{\ell^n(V(F))}.
\end{equation}
Here, $w$ ranges over $W(F)$, and for each $w \in W(F)$, the range of the supremum is the set of all irreducible projective (resp. affine) algebraic curves $\gamma$ in $V$ of degree at most $d$, which contain $w$ but are not contained in $W$. The constant $C_{N,d,n} > 0$ depends only on $N,d,n$ (and in particular is independent of $F$).
\end{theorem}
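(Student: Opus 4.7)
The plan is to adapt Dvir's polynomial method, combined with the method of multiplicities of Saraf--Sudan and Dvir--Kopparty--Saraf--Sudan, to the setting where Kakeya counting takes place on the variety $W$ rather than on $\P^{n-1}(F)$. First I would reduce the $L^n$ inequality to the following single-scale restricted estimate: for each $E \subset V(F)$ and $\kappa \geq 1$,
\beq
|\{w \in W(F) : \textstyle\sup_{\gamma \ni w} |\gamma(F) \cap E| \geq \kappa\}| \leq C_{N,d,n} |F|^{n-1}|E|/\kappa^n,
\eeq
where the supremum is over the admissible curves described in the theorem. Making this reduction log-free at the $L^n$ endpoint would require a careful layer-cake or duality argument.

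Denote the set on the left by $X_{\kappa,E}$. To prove the restricted estimate, I would fix a degree $D$ and multiplicity $m$ and, by dimension counting, find a polynomial $P$ on $V$ of degree $\leq D$ vanishing on $E$ to order $\geq m$ with $P|_W \not\equiv 0$. Such $P$ exists provided the Hilbert function $h_W(D) := \dim \mathcal{P}_{\leq D}(V)/I(W)$ exceeds the number of multiplicity conditions $|E|\binom{m+N-1}{N}$. Since $W$ has dimension $n-1$ and degree $\leq d$, its Hilbert polynomial has degree $n-1$ and leading coefficient at most $d/(n-1)!$, so $h_W(D) \gtrsim_{n,d} D^{n-1}$ for $D \gg_{n,d} 1$. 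For each $w \in X_{\kappa,E}$, the restriction $P|_{\gamma_w}$ on the irreducible curve $\gamma_w \not\subset W$ has degree $\leq Dd$ but vanishes to total multiplicity $\geq m\kappa$; whenever $Dd < m\kappa$ it therefore vanishes identically, so in particular $P(w) = 0$. The nonzero function $P|_W$ on $W$ then has $X_{\kappa,E}$ contained in its zero locus, a subvariety of $W$ of dimension $\leq n-2$ and degree $O_{n,d}(D)$; a Schwartz--Zippel bound on $W$ gives $|X_{\kappa,E}| \leq C_{n,d} D |F|^{n-2}$. Optimizing $D$ and $m$ subject to $h_W(D) > |E|\binom{m+N-1}{N}$ and $Dd < m\kappa$ should then yield the target bound.

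The main obstacle I anticipate is the parameter optimization: the Hilbert function of $W$ grows like $D^{n-1}$ while the multiplicity conditions grow like $m^N$, and a careful balance is needed to produce the sharp factor $\kappa^{-n}$. A naive optimization appears to cover only the range $\kappa \gtrsim |F|^{1/(2n)}|E|^{1/n}$, and closing the remaining subrange (where $\kappa$ is close to $|E|^{1/n}$) may require an additional ingredient: perhaps a variant of the polynomial method with $P$ also vanishing on $W$ to some order, or an argument showing that $P|_W$ must vanish at each $w \in X_{\kappa,E}$ to higher multiplicity than just one. A secondary concern is the reduction in the first step: while the corresponding reduction to restricted weak-type for Dvir's theorem is essentially trivial, the strong-type $L^n$ maximal inequality at the endpoint is more delicate and may require duality to avoid a factor of $\log |F|$.
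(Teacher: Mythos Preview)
Your approach has a genuine gap, and it is precisely the one you flagged. Applying the polynomial method directly on $W$ gives $|X_{\kappa,E}| \ll D|F|^{n-2}$ with $D$ constrained by $h_W(D) \gtrsim D^{n-1} > |E|\binom{m+N-1}{N}$ and $Dd < m\kappa$; but since the multiplicity conditions are imposed in $\A^N$, they cost $\sim m^N$ with $N$ possibly much larger than $n$, and increasing $m$ never helps. Even with $m=O(1)$ you are forced to take $D \gtrsim |E|^{1/(n-1)}$, so the argument only runs when $\kappa \gtrsim |E|^{1/(n-1)}$, whereas the target estimate is nontrivial already for $\kappa \gtrsim |E|^{1/n}$. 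The paper explicitly notes this: the direct polynomial method on $W$ ``does not seem to give optimal results when $J$ is small,'' and for a generic $W$ with trivial automorphism group there is no evident mechanism on $W$ itself to amplify from large $J$ to small $J$.

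The paper's fix is to avoid working on $W$ altogether. It first proves the flat case $W = \A^{n-1} \subset \A^n$ (Theorem~\ref{alg-thm2}); here the crucial extra structure is translation invariance, which allows a Nikishin--Maurey--Pisier--Stein random superposition (Proposition~\ref{red}) reducing the distributional estimate to the single regime $\lambda \asymp \|f\|_{\ell^n}$. In that regime Dvir's method with multiplicities (essentially your sketch, now with $N=n$) does close. For general $W$, the paper then \emph{flattens} $W$: after writing $W$ as a bounded intersection of hypersurfaces and graphing to place $W$ inside a coordinate hyperplane $\A^{N-1}$, one takes a random surjective linear $T:\A^{N-1}\to\A^{n-1}$, pushes forward $f$ by $f_T(x)=(\sum_{Tv=x}|f(v)|^n)^{1/n}$ and the maximal function by $g_T(y)=\sup_{Tw=y}g(w)$, applies the flat estimate to $(f_T,g_T)$, and averages over $T$. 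The averaging step reduces to showing that a random projection of any $\Omega \subset W(F)$ to $F^{n-1}$ has $|T(\Omega)| \gg |\Omega|$ with positive probability, which follows from $|W(F)|\ll|F|^{n-1}$ by Cauchy--Schwarz.

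Your secondary worry about a log-free reduction is also handled nontrivially in the paper: the distributional estimate (Proposition~\ref{alg-prop}) carries an extra value-threshold parameter $A$ (it is stated for $f:F^n\to\{0\}\cup[A,\infty)$), and this refinement is exactly what allows the real-interpolation computation to sum without a logarithm. But that is a technical point; the essential missing idea in your plan is the factorisation-plus-flattening that substitutes for the automorphisms $W$ does not have.
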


\begin{remark} The observation that the polynomial method can be used to control the intersection of a specified set with curves of bounded degree (as opposed to lines) already appears in \cite{dvir:dvirwigderson}, in Claim 3.3 and the discussion preceding it.
\end{remark}

\begin{remark} \label{rem:ndg} The non-degeneracy requirement that $\gamma$ is not contained in $W$ is easily seen to be necessary, for instance by considering the case when $V = \A^n$, $W = \A^{n-1}$, and $f$ is the indicator function of $F^{n-1}$. 
\end{remark}

\begin{remark} Theorem \ref{alg-thm} is stated for $V$ equal to all of projective or affine space, but one can trivially restrict $V$ to any smaller set (e.g. some intermediate subvariety between $W$ and $\P^N$ or $\A^N$) and obtain the same estimate for $f \in \ell^n(V(F))$.  Indeed, the most interesting case in practice is that where $V$ is an $n$-dimensional variety containing $W$.  
\end{remark}

Theorem \ref{max-thm} follows immediately from Theorem \ref{alg-thm} by taking $V = \P^{n}$ and $W$ the hyperplane at infinity, and setting $d=1$; then the set of projective lines through a given point of $W(F)$ is naturally identified with the set of lines in $F^n$ parallel to some fixed line.

As a corollary of Theorem \ref{alg-thm}, we have a generalization of Theorem \ref{dvir-thm} to algebraic varieties:

\begin{corollary}[Kakeya set conjecture for algebraic varieties over $F$]\label{setcor} Let $N,d,n,F,V,W$ be as in Theorem \ref{alg-thm}.  Let $w_1,\ldots,w_J \in W(F)$ be distinct $F$-points in $W$, and for each $1 \leq j \leq J$ let $\gamma_j$ be an irreducible algebraic curve of degree at most $d$ in $V$ that passes through $w_j$ but does not lie in $W$.  Let $\lambda \geq 1$, and let $E \subset V(F)$ be a set of $F$-points in $V$ such that $|E \cap \gamma_j(F)| \geq \lambda$ for all $1 \leq j \leq J$.  Then $|E| \geq C_{N,d,n}^{-n} J \lambda^n / |F|^{n-1}$, where $C_{N,d,n}$ is the constant in Theorem \ref{alg-thm}.
\end{corollary}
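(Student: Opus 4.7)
The plan is to derive the corollary from Theorem~\ref{alg-thm} by a direct application to the indicator function $f = 1_E$. With this choice, the right-hand side of \eqref{kakeq} becomes $C_{N,d,n} |F|^{(n-1)/n} |E|^{1/n}$, so the task reduces to obtaining a suitable lower bound on the left-hand side.

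First I would observe that for each $j$, the curve $\gamma_j$ is admissible in the supremum defining the maximal function at $w_j$: it is irreducible of degree at most $d$, passes through $w_j$, and does not lie in $W$. Hence
\beq
\sup_{\gamma \ni w_j} \sum_{v \in \gamma(F)} |1_E(v)| \; \geq \; |E \cap \gamma_j(F)| \; \geq \; \lambda.
\eeq
Since the points $w_1,\ldots,w_J$ are distinct elements of $W(F)$ and the summands are non-negative, restricting the $\ell^n(W(F))$-sum to these $J$ points gives
\beq
\Big\| \sup_{\gamma \ni w} \sum_{v \in \gamma(F)} |1_E(v)| \Big\|_{\ell^n(W(F))}^n \; \geq \; \sum_{j=1}^J \lambda^n \; = \; J \lambda^n.
\eeq

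Combining this lower bound with Theorem~\ref{alg-thm} yields $J \lambda^n \leq C_{N,d,n}^n |F|^{n-1} |E|$, which rearranges to the desired inequality $|E| \geq C_{N,d,n}^{-n} J \lambda^n / |F|^{n-1}$. There is essentially no obstacle here beyond checking that $\gamma_j$ lies in the class of curves over which the supremum is taken, which is exactly the hypothesis on the $\gamma_j$; the argument is a standard conversion of an $\ell^n$ maximal estimate into a set-size bound via restriction to the extremizing points and testing on an indicator function.
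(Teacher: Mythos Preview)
Your argument is correct and is exactly the intended deduction: the paper states Corollary~\ref{setcor} as an immediate consequence of Theorem~\ref{alg-thm} without writing out a separate proof, but the derivation is precisely the one you give---test the maximal inequality on $f=1_E$, bound the left-hand side from below by restricting to the points $w_1,\ldots,w_J$ and using the admissible curves $\gamma_j$, and rearrange. This mirrors the paper's explicit remark that Theorem~\ref{dvir-thm} follows from Theorem~\ref{max-thm} by specializing $f=1_E$.
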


In Sections \ref{flat-sec} and \ref{general-sec} we shall prove Theorem \ref{alg-thm}.  The strategy will be to first establish the ``flat'' case $V = \A^n$, $W = \A^{n-1}$ of the theorem using (a weighted form of) Dvir's method combined with the Nikishin-Maurey-Pisier-Stein factorization trick and some real interpolation techniques.  Then, using a random projection trick to flatten $W$, we deduce the general case from the flat case.  In Section \ref{variants-sec} we discuss some variants and generalizations of the above results.

\subsection{Notation}

We use the notation $X \ll Y$, $Y \gg X$, or $X = O(Y)$ to denote an estimate of the form $X \leq CY$ where $C$ is a constant that is allowed to depend on parameters such as the dimension $n$ (but is always independent of the field $F$).  We sometimes use subscripts to draw attention to additional parameters on which the implied constants depend; for example, $X \ll_{C_0} Y$ means that $X \leq CY$ where $C$ depends not only on $n$ but also some constant $C_0$ which is local to the argument.   We write $X \asymp Y$ for $X \ll Y \ll X$.

Unless otherwise specified, all varieties are defined over the fixed finite field $F$; so that when we refer to ``a curve of degree $d$ in $\A^n$'', for instance, we mean a curve defined over $F$.

%The algebraic geometry that we shall use is summarized in Appendix \ref{alg}.

\subsection{Acknowledgements}

The first author is partially supported by NSF-CAREER grant DMS-0448750 and a Sloan Research Fellowship.  The third author is supported by a grant from the MacArthur Foundation, and by NSF grant DMS-0649473.  We are indebted to the anonymous referee for useful comments and corrections.

\section{The flat case}\label{flat-sec}

To prove Theorem \ref{alg-thm}, we first establish a model case when $V=\A^n$ (thus $V(F)=F^n$) and $W$ is the standard hyperplane $W = \A^{n-1}$ of $\A^n$ embedded in $\A^n$ in the usual manner; the reason for treating this case first is that it enjoys a very useful translation-invariance symmetry that can be exploited to obtain good estimates.  In the next section we shall use a random projection trick to reduce the general case to this case.

The main result of this section is

\begin{theorem}[Kakeya maximal conjecture for curves in $F^n$]\label{alg-thm2}  Let $d, n \geq 1$.  For any $f: F^n \to \R$, let $f^*: F^{n-1} \to \R$ be the maximal function
\begin{equation}\label{fw}
 f^*(w) := \sup_{\gamma \ni w} \sum_{v \in \gamma(F) \backslash F^{n-1}} |f(v)|
 \end{equation}
where the supremum is over all irreducible algebraic curves $\gamma$ in $F^n$ of degree at most $d$ that pass through $w$.  Then we have
$$
\| f^* \|_{\ell^n(F^{n-1})} \ll |F|^{(n-1)/n} \|f\|_{\ell^n(F^n)},
$$
where the implied constants can depend on $d$ and $n$.
\end{theorem}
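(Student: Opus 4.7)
The strategy, following the hint in the introduction, is to combine Dvir's polynomial method with real interpolation and the Nikishin--Maurey--Pisier--Stein factorization theorem, crucially exploiting the translation invariance of the flat setting along $W = \A^{n-1}$.

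The first step is to establish a combinatorial Kakeya estimate adapted to curves of degree at most $d$: given $E \subset F^n$ and a set $G \subset F^{n-1}$ of ``$\lambda$-rich'' points (each $w \in G$ admits an irreducible curve $\gamma_w$ of degree $\leq d$ through $w$, not contained in $W$, with $|\gamma_w(F) \cap E \setminus W| \geq \lambda$), one shows $|G|\lambda^n \ll_{d,n} |F|^{n-1}|E|$. By a standard dimension count one can find a nonzero polynomial $P$ of degree $D \ll |E|^{1/n}$ vanishing on $E \setminus W$; dividing out the largest power of the linear form cutting out $W$ (which is nonvanishing on $E \setminus W$, hence preserving vanishing on that set) one may assume $P|_W \not\equiv 0$. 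When $\lambda > d D$, B\'ezout forces $P$ to vanish identically on each $\gamma_w$, so $P(w)=0$ for every $w \in G$, and Schwartz--Zippel applied to $P|_W$ bounds $|G| \leq D|F|^{n-2}$. Combined with the trivial bound $|G| \leq |F|^{n-1}$ in the opposite range of $\lambda$, this yields the claim.

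Specializing to $f = 1_E$ translates this into a restricted weak-type $(n,n)$ estimate for $f \mapsto f^*$. The second step is to bootstrap to the full strong-type $(n,n)$ bound on all of $\ell^n(F^n)$. The plan is to invoke the Nikishin--Maurey--Pisier--Stein factorization: since $f \mapsto f^*$ is a positive sublinear operator with a weak-type $(n,n)$ bound, it factors as multiplication by a weight on $F^{n-1}$ composed with a strong-type $(n,n)$ operator. Crucially, in the flat case the maximal operator commutes with translations of $F^n$ in the direction of $W$, and averaging the Nikishin weight over this translation action forces it to be comparable to a constant, so it can be removed. Standard real interpolation between dyadic level-set pieces then converts the restricted estimate into the full strong-type bound without introducing any logarithmic loss in $|F|$.

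The main obstacle is precisely this upgrade from restricted weak-type to strong type at the common endpoint $(n,n)$: single-endpoint Marcinkiewicz interpolation does not apply, so one genuinely needs the translation symmetry to collapse the Nikishin weight into a constant. This is why the hyperplane case is isolated in this section; Section \ref{general-sec} must then simulate an effective analogue of this symmetry in the general setting via random projections.
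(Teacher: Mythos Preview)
Your outline has the right ingredients but the key quantitative step is wrong. The two bounds you obtain from the unweighted Dvir argument are
\[
|G| \le D\,|F|^{n-2}\quad\text{for }\lambda>dD,\qquad |G|\le |F|^{n-1}\quad\text{always},
\]
with $D\asymp |E|^{1/n}$. The second bound gives $|G|\lambda^n \ll |F|^{n-1}|E|$ only in the range $\lambda\ll |E|^{1/n}$. In the complementary range, the first bound is \emph{independent of $\lambda$}: it says $|G|\ll |E|^{1/n}|F|^{n-2}$, so $|G|\lambda^n\ll |E|^{1/n}|F|^{n-2}\lambda^n$, and this is $\ll |F|^{n-1}|E|$ only when $\lambda^n\ll |F|\,|E|^{(n-1)/n}$. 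For $\lambda$ close to $|F|$ and $|E|$ small (e.g.\ $E$ a single curve) this fails badly; the restricted weak-type $(n,n)$ inequality $|G|\lambda^n\ll |F|^{n-1}|E|$ simply does not follow from the unweighted polynomial method. Consequently your Nikishin step has no valid input, and at the endpoint $p=q=n$ the ``weak-type plus translation invariance implies strong-type'' heuristic is already delicate (the Hardy--Littlewood maximal function is a cautionary example).

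The paper uses the same three ingredients but in a different order, and this reordering is exactly what closes the gap. One first formulates a distributional estimate (Proposition~\ref{alg-prop}/\ref{dist}) for functions with values in $\{0\}\cup[A,\infty)$. The random-translation (Nikishin--Maurey--Pisier--Stein) trick is then applied \emph{inside} the proof of that distributional estimate, as Proposition~\ref{red}: by superposing $M\asymp(\lambda/\|f\|_n)^n$ random $W$-translates of $f$, one reduces to the single threshold $\lambda\asymp\|f\|_n$, which is precisely the regime where the polynomial method is sharp. At that threshold the paper applies the polynomial method \emph{with multiplicities} (vanishing to order $f(v)$ at each $v$), not just the indicator version; this is what allows the distributional estimate to be stated for the level-restricted pieces $f_{j,\alpha}$ that arise in the real-interpolation step deducing Theorem~\ref{alg-thm2} from Proposition~\ref{alg-prop}. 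So the translation symmetry is used not to pass from weak to strong type, but to extend the polynomial bound from the threshold $\lambda$ to all $\lambda$; your proposal places the factorization step too late to recover the missing $\lambda$-dependence.
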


\begin{remark} It is easy to see that the $d=1$ case of Theorem \ref{alg-thm2} implies Theorem \ref{max-thm}, by applying a projective transformation to move $F^{n-1}$ to the plane at infinity (cf. \cite{tao:boch-rest}).  (Strictly speaking, this transformation will leave out a codimension one set of directions, and also a codimension one set of points, but one can easily erase this omission by rotating the estimate a bounded number of times and using the triangle inequality; we omit the details.  In any case, Theorem \ref{max-thm} will also be deduced from the stronger Theorem \ref{alg-thm}, proven in the next section.)
\end{remark}

Theorem \ref{alg-thm2} will follow from the following distributional estimate:

\begin{proposition}[Kakeya distributional estimate for curves in $F^n$]\label{alg-prop}  Let $d, n \geq 1$.  Then there exists $K = K_{d,n}$ such that for any $A > 0$, any $f: F^n \to \{0\} \cup [A,+\infty)$, and every $K \|f\|_{\ell^n(F^n)} \leq \lambda \leq A|F|$, we have
$$
|\{ w \in F^{n-1}: f^*(w) \geq \lambda \}| \ll \frac{|F|^{n-2}}{A \lambda^{n-1}} \|f\|_{\ell^n(F^n)}^n,$$
where the supremum and implied constants are as in Theorem \ref{alg-thm2}.
\end{proposition}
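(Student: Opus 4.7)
The plan is to apply Dvir's polynomial method with Hasse multiplicities weighted by the function $f$. Write $E := \{w \in F^{n-1} : f^*(w) \geq \lambda\}$, $S := \operatorname{supp}(f) \subseteq F^n$, $N := \|f\|_{\ell^n(F^n)}^n$, and $W := F^{n-1} \subset F^n$; the goal is to bound $|E|$.

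The main construction is, for suitable parameters $c = c(d,n) > 0$ and an integer degree $D$ to be chosen, to produce a nonzero polynomial $P \in F[x_1,\dots,x_n]$ of total degree at most $D$ such that (a) $P$ vanishes at each $v \in S \setminus W$ to Hasse multiplicity at least $\mu(v) := \lceil c\, f(v)/A \rceil$, and (b) the restriction $P|_W$ is not the zero polynomial on $W$. The hypothesis $f(v) \geq A$ on $S$ ensures $\mu(v) \geq 1$ there. The conditions in (a) impose on the $\binom{D+n}{n}$-dimensional space of polynomials of degree $\leq D$ a system of at most $\sum_v \binom{\mu(v)+n-1}{n} \asymp c^n N/A^n$ linear constraints, while the $P|_W \not\equiv 0$ condition avoids a subspace of codimension $\binom{D+n-1}{n-1} \asymp D^{n-1}$. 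A dimension count then gives such $P$ provided $D^{n-1} \gtrsim c^n N/A^n$.

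Next, for each $w \in E$, the curve $\gamma_w$ of degree $\leq d$ through $w$ is not contained in $W$, and the restriction $P|_{\gamma_w}$ is a polynomial on $\gamma_w$ of degree at most $dD$ in the usual sense. At each $v \in \gamma_w(F) \cap S \setminus W$ it vanishes with Hasse multiplicity at least $\mu(v)$, so its total zero count (with multiplicity) is at least
\[
\sum_{v \in \gamma_w(F) \cap S \setminus W} \mu(v) \;\geq\; \tfrac{c}{A} \sum_{v \in \gamma_w(F) \setminus W} f(v) \;\geq\; \tfrac{c\lambda}{A}.
\]
Once this exceeds $dD$ --- which I arrange by the choice of parameters and by taking the constant $K$ sufficiently large (depending only on $d,n$) in the hypothesis $\lambda \geq K\|f\|_{\ell^n(F^n)}$ --- we conclude $P|_{\gamma_w} \equiv 0$, and in particular $P(w) = 0$. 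Since this holds for all $w \in E$, the restriction $P|_W$ is a nonzero polynomial of degree $\leq D$ on $F^{n-1}$ vanishing on $E$, so Schwartz--Zippel yields $|E| \leq D \cdot |F|^{n-2}$.

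The main obstacle is the joint calibration of $c$ and $D$ so that (i) the existence inequality $D^{n-1} \gtrsim c^n N/A^n$ and (ii) the curve-vanishing inequality $c\lambda/A > dD$ are simultaneously met, while $D$ is small enough to yield $D \lesssim N/(A\lambda^{n-1})$. Eliminating $c$ between (i) and (ii) reduces to a constraint of the shape $\lambda \gtrsim d(N/A)^{1/(n-1)}$, and one must verify that this regime is covered by the standing hypotheses $K\|f\|_{\ell^n(F^n)} \leq \lambda \leq A|F|$ by a careful choice of $K$. The upper bound $\lambda \leq A|F|$ enters in checking that the multiplicity $\mu(v)$ does not exceed $D$ (so that the vanishing conditions are consistent), while the lower bound on $\lambda$ ensures $c \geq 1$ so that the weighted multiplicity argument is nontrivial. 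Delicate parameter choices and possibly a multiplicity-weighted refinement of the Schwartz--Zippel step will be needed to square the sharp exponent $\lambda^{n-1}$ in the target bound with what the naive dimension count provides.
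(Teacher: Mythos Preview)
Your strategy captures the polynomial-method core of the argument, but it cannot by itself deliver the sharp $\lambda^{n-1}$ dependence across the full range $K\|f\|_{\ell^n} \leq \lambda \leq A|F|$, and the paper supplies an additional key ingredient that your proposal is missing.

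To see the gap concretely, normalise $A=1$. With $c \asymp 1$ and the factoring trick (find a nonzero $P$ of degree $\leq D$ vanishing to order $\mu(v) \asymp f(v)$ at each $v\in S\setminus W$, then write $P = x_n^j Q$ so that $Q|_W \not\equiv 0$ while $Q$ retains the multiplicities off $W$), the existence of $P$ needs $D^n \gg \sum_v f(v)^n = N$, i.e.\ $D \gg N^{1/n}$. The curve-vanishing step needs $\lambda \gg dD$, which is exactly the hypothesis $\lambda \geq K N^{1/n}$ once $D$ is at the threshold $D \asymp N^{1/n}$. Schwartz--Zippel then gives $|E| \leq D|F|^{n-2} \asymp N^{1/n}|F|^{n-2}$. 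But the target bound is $|E| \ll N|F|^{n-2}/\lambda^{n-1}$, which coincides with $N^{1/n}|F|^{n-2}$ only when $\lambda \asymp N^{1/n}$; for larger $\lambda$ the target is strictly smaller. Attempting to push $c$ below $1$ to shrink $D$ fails because the ceiling $\mu(v) = \lceil cf(v)\rceil$ then collapses to $1$ on much of $S$, and the constraint count is no longer $\asymp c^n N$. Your own final paragraph already senses this tension without resolving it.

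The paper resolves it by a Nikishin--Maurey--Pisier--Stein factorisation step exploiting the translation invariance of $W = F^{n-1}$ inside $F^n$: one superimposes $M \asymp (\lambda/(K_0\|f\|_{\ell^n}))^n$ random $W$-translates of $f$, replacing $f$ by $f_M(v) := (\sum_m f(v-u_m)^n)^{1/n}$. This preserves the pointwise lower bound $f_M \geq 1$ and the hypothesis \eqref{distal} along each translated curve, while boosting $\|f_M\|_{\ell^n}^n$ to $MN$, so that now $\lambda \asymp K_0\|f_M\|_{\ell^n}$ sits at the threshold. The polynomial method then yields $|\Omega| \ll |F|^{n-2}\|f_M\|_{\ell^n}$ for the set $\Omega$ of bad translated base points, and comparing $|\Omega|$ with $\min(MJ,|F|^{n-1})$ (the upper bound $\lambda \leq |F|$ is used precisely here, to rule out saturation) recovers the desired estimate for $J=|E|$. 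Without a device of this sort, the direct polynomial argument plateaus at the threshold bound and does not improve as $\lambda$ grows.

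A smaller side issue: your condition (i), $D^{n-1} \gg c^n N/A^n$, is not justified as stated. One cannot guarantee $P|_W \not\equiv 0$ merely by comparing the number of vanishing constraints to the codimension of $\{P|_W=0\}$; the constrained subspace could still lie entirely inside $\{P|_W=0\}$. The correct route is to require only $D^n \gg c^n N/A^n$ (so that some nonzero $P$ exists) and then factor out the maximal power of $x_n$; since the multiplicity conditions are imposed only off $W$, they survive the division. But as explained above, even this correction does not close the argument.
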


\begin{proof}[Proof of Theorem \ref{alg-thm2} assuming Proposition \ref{alg-prop}]  We will use a variant of the real interpolation method.  We may of course take $f$ to be non-negative and not identically zero.  
We normalize $\|f\|_{\ell^n(F^n)} = 1$.  Using the identity
\begin{equation}\label{distrib}
 \|f^*\|_{\ell^n(F^{n-1})}^n = n \int_0^\infty |\{ w \in F^{n-1}: f^*(w) \geq \alpha \}| \alpha^{n-1}\ d\alpha
\end{equation}
it suffices to show that
$$ \int_0^\infty |\{ w \in F^{n-1}: f^*(w) \geq \alpha \}| \alpha^{n-1}\ d\alpha \ll |F|^{n-1}.$$
The crude bound $|\{ w \in F^{n-1}: f^*(w) \geq \alpha \}| \leq |F|^{n-1}$ allows one to dispose of the region $\alpha \leq C_0$, where $C_0 = O(1)$ is a large constant to be chosen later.  So it remains to show that
\begin{equation}\label{mook}
 \int_{C_0}^\infty |\{ w \in F^{n-1}: f^*(w) \geq \alpha \}| \alpha^{n-1}\ d\alpha \ll |F|^{n-1}.
\end{equation}
Fix $\alpha > C_0$.  For each positive integer $j$ we define a function $f_{j,\alpha}: F^n \ra \R$ by 
\beq
f_{j,\alpha}(v) = \left\{ \begin{array}{ll} f(v) &  \mbox {if}\ 100^{n(j-1)} \frac{\alpha}{C_0^{1/2} |F|} \leq f(v) < 100^{nj} \frac{\alpha}{C_0^{1/2} |F|} \\
0 & \mbox{otherwise} \\
\end{array} \right.
\eeq
We similarly define $f_{0,\alpha}(v)$ to be equal to $f(v)$ when $f(v) < \frac{\alpha}{C_0^{1/2} |F|}$ and $0$ otherwise. Finally, we define $f_\alpha(v)$ to be equal to $f(v)$ when $f(v) \geq 100^{n(j_\alpha-1)} \frac{\alpha}{C_0^{1/2} |F|}$ and $0$ otherwise, where $j_\alpha$ is the largest integer satisfying $\alpha/2^{j_\alpha+1} \geq K$, where $K$ is the quantity in Proposition \ref{alg-prop}.
Then $f$ decomposes as 
\beq
f = f_\alpha + f_{0,\alpha} + \sum_{j=1}^{j_\alpha - 1} f_{j,\alpha}.
\eeq
%where $f_{j,\alpha}(v) = f(v)$ for all
%where $f_{0,\alpha}$ and $f_{j,\alpha}$ are the restrictions of $f$ to those $v$ for which 
%$$f(v) < \frac{\alpha}{C_0^{1/2} |F|}$$ and 
%$$100^{n(j-1)} \frac{\alpha}{C_0^{1/2} |F|} \leq f(v) \leq 100^{nj} \frac{\alpha}{C_0^{1/2} |F|}$$ 
%respectively.  

From Lemma \ref{size-lem} and the bound $f_{0,\alpha}(v) < \frac{\alpha}{C_0^{1/2} |F|}$ we have
$$ f^*_{0,\alpha}(w) \ll \frac{\alpha}{C_0^{1/2}}$$
and thus (if $C_0$ is large enough)
$$ f^*(w) \leq f_\alpha^*(w) + \sum_{j=1}^{j_\alpha -1} f^*_{j,\alpha}(w) + \frac{\alpha}{2}$$
and hence
$$ |\{ w \in F^{n-1}: f^*(w) \geq \alpha\}| \leq |\{ w \in F^{n-1}: f^*_{\alpha}(w) \geq K\}| + \sum_{j=1}^{j_\alpha - 1} |\{ w \in F^{n-1}: f^*_{j,\alpha}(w) \geq \alpha/2^{j+1}\}|.$$
On the other hand, we have
$$ \max(\|f_{\alpha} \|_{\ell^n(F^n)},\|f_{j,\alpha} \|_{\ell^n(F^n)}) \leq  \|f \|_{\ell^n(F^n)} = 1.$$

Applying Proposition \ref{alg-prop} with $A := 100^{n(j_\alpha-1)} \frac{\alpha}{C_0^{1/2} |F|}$ and $\lambda := K$ we obtain
\[
|\{ w \in F^{n-1}: f^*_{\alpha}(w) \geq K\}| \ll_{C_0} 100^{-nj_\alpha} |F|^{n-1} \ll \alpha^{-2n} |F|^{n-1}
\]
where the last inequality follows from the maximality of $j_\alpha.$ It follows that
\[
\int_{C_0}^{\infty} |\{ w \in F^{n-1}: f^*_{\alpha}(w) \geq K\}| \alpha^{n-1}\ d\alpha \ll_{C_{0}} |F|^{n-1} 
\]

For each $j < j_{\alpha}$, we now apply Proposition \ref{alg-prop} with $A := 100^{n(j-1)} \frac{\alpha}{C_0^{1/2} |F|}$ and $\lambda := \min( \alpha/2^{j+1}, A|F|)$; note by the choice of $j_\alpha$ and the hypothesis $\alpha \geq C_0$, that $\lambda \geq K$ if $C_0$ is large enough).  Using the fact that
\beq
A|F| \gg \frac{\alpha}{2^{j+1} C_0^{1/2}}
\eeq
we obtain
$$ |\{ w \in F^{n-1}: f^*_{j,\alpha}(w) \geq \alpha/2^{j+1}\}| \ll_{C_0} 100^{-nj} 2^{(n-1)j} \frac{|F|^{n-1}}{\alpha^n} \| f_{j,\alpha} \|_{\ell^n(F^n)}^n$$
where the subscript $\ll_{C_0}$ indicates that the implied constant can depend on $C_0$.  Putting this all together, one finds that
\begin{multline*} 
\int_{C_0}^{\infty} |\{ w \in F^{n-1}: f^*(w) \geq \alpha\}|\alpha^{n-1}\ d\alpha \\ 
\ll_{C_0} |F|^{n-1} + \sum_{j=1}^\infty 100^{-nj} 2^{(n-1)j} |F|^{n-1} \int_{C_0}^\infty \sum_{v \in \Sigma_{j,\alpha}} f(v)^n\ \frac{d\alpha}{\alpha}
\end{multline*}
 where $\Sigma_{j,\alpha} \subset F^n$ denotes the support of $f_{j,\alpha}$.
 
Now interchange the $\alpha$ integration and $v$ summation; for each fixed $v$, the function of $\alpha$ to be integrated against $d \alpha / \alpha$ is the product of $f(v)^n$ with the characteristic function of an interval of the form $[A, 100^n A]$.  The size of this integral is a constant multiple of $f(v)^n$; so we arrive at the inequality
$$ \int_{C_0}^{\infty}|\{ w \in F^{n-1}: f^*(w) \geq \alpha\}|\alpha^{n-1}\ d\alpha \ll_{C_0} \sum_{j=1}^\infty 100^{-nj} 2^{(n-1)j} |F|^{n-1} \sum_{v \in F^n} f(v)^n$$
and \eqref{mook} now follows from the normalization $\|f\|_{\ell^n(F^n)} = 1$.
\end{proof}

It remains to prove Proposition \ref{alg-prop}.  We first rewrite that proposition in an equivalent form.    We may first normalize by dividing $f$ and $\lambda$ by $A$; thus, it suffices to prove Proposition~\ref{alg-prop} with $A=1$.  Let us enumerate the set $\{ w \in F^{n-1}: f^*(w) \geq \lambda \}$ as $w_1,\ldots,w_J$, and for each $1 \leq j \leq J$, let $\gamma_j$ be the curve attaining the supremum used to define $f^*(w_j)$ in \eqref{fw}.  (Note that as there are only finitely many possible choices for $\gamma_j$, the supremum here is attainable.)  Our task is now to show

\begin{proposition}[Distributional estimate, again]\label{dist}  Let $d, n \geq 1$, and let $w_1,\ldots,w_J$ be distinct points in $F^{n-1}$.  For each $1 \leq j \leq J$, let $\gamma_j$ be an irreducible algebraic curve in $\A^n$ of degree at most $d$ that passes through $w_j$ but does not lie in $\A^{n-1}$.  Let $f: F^n \to \{0\} \cup [1,+\infty)$ and $K \|f\|_{\ell^n(F^n)} \leq \lambda \leq |F|$ for some sufficiently large $K$ depending only on $d,n$, and suppose that
\begin{equation}\label{distal}
\sum_{v \in \gamma_j(F) \backslash F^{n-1}} f(v) \geq \lambda
\end{equation}
for all $1 \leq j \leq J$.  Then
\begin{equation}\label{mong}
J \ll \frac{|F|^{n-2}}{\lambda^{n-1}} \sum_{v \in F^n} f(v)^n.
\end{equation}
\end{proposition}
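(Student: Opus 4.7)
The plan is to apply a weighted version of Dvir's polynomial method: construct a nonzero polynomial $P$ of some low degree $D$ that must vanish on each $\gamma_j$, and then restrict to $W = F^{n-1} = \{x_n=0\}$ and apply Schwartz-Zippel.

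For a parameter $D$ to be chosen, I would set $m(v) := \lceil (Dd+1)f(v)/\lambda \rceil$ for $v \in \mathrm{supp}(f)$ and $m(v) := 0$ otherwise. A standard dimension count produces a nonzero $P \in F[x_1,\ldots,x_n]$ of degree at most $D$ vanishing to order $m(v)$ at each $v$, provided
\[
\binom{D+n}{n} > \sum_v \binom{m(v)+n-1}{n}.
\]
Using $\binom{k+n-1}{n} \ll k^n$ for $k \geq 1$, the right-hand side is at most a constant times $(Dd/\lambda)^n \|f\|_{\ell^n(F^n)}^n + |\mathrm{supp}(f)|$; and since $f \geq 1$ on its support we have $|\mathrm{supp}(f)| \leq \|f\|_{\ell^n(F^n)}^n$. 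Thus, under the hypothesis $\lambda \geq K \|f\|_{\ell^n(F^n)}$ with $K$ sufficiently large in terms of $d,n$, the existence of such a $P$ is guaranteed as soon as $D$ is a sufficiently large constant multiple of $\|f\|_{\ell^n(F^n)}$.

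The choice of multiplicities forces $P|_{\gamma_j} \equiv 0$: this restriction is a polynomial of degree at most $Dd$ on $\gamma_j$, and its total vanishing order is at least
\[
\sum_{v \in \gamma_j \setminus F^{n-1}} m(v) \geq \frac{Dd+1}{\lambda}\sum_{v \in \gamma_j \setminus F^{n-1}} f(v) \geq Dd+1,
\]
which exceeds the degree. In particular $w_j \in Z(P)$ for every $j$. Since no $\gamma_j$ is contained in $F^{n-1}$, I may iteratively divide $P$ by $x_n$ without disturbing its vanishing on any $\gamma_j$, and so I may assume $P|_{F^{n-1}}$ is a nonzero polynomial on $F^{n-1}$ of degree at most $D$ vanishing at each $w_j$. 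Schwartz-Zippel then yields $J \leq D |F|^{n-2} \ll \|f\|_{\ell^n(F^n)} |F|^{n-2}$.

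The main obstacle is that this direct bound misses the target $J \ll \|f\|_{\ell^n(F^n)}^n \lambda^{-(n-1)}|F|^{n-2}$ by a factor of $(\lambda/\|f\|_{\ell^n(F^n)})^{n-1}$, which can be as large as $(|F|/\|f\|_{\ell^n(F^n)})^{n-1}$. To close the gap without logarithmic losses I would invoke the Nikishin-Maurey-Pisier-Stein factorization trick flagged in the paper's introduction: after constructing an auxiliary weight on $F^{n-1}$, the weighted distributional estimate for $f$ reduces to a scaled-indicator situation for which the polynomial-method argument above, with multiplicities now sharply tuned to the indicator level, recovers the target exactly. Combined with the real-interpolation argument already deployed in the deduction of Theorem \ref{alg-thm2} from the proposition, this NMPS reduction is the technical heart of the argument and is precisely what distinguishes the maximal-function estimate from the direct Kakeya set estimate.
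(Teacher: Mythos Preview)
Your polynomial-method step is essentially the paper's: both construct a nonzero polynomial of degree $D \asymp \|f\|_{\ell^n}$ vanishing to prescribed multiplicities, invoke Bezout along each $\gamma_j$, factor out powers of $x_n$, and apply Schwartz--Zippel on $\{x_n=0\}$ to conclude $J \ll |F|^{n-2}\|f\|_{\ell^n}$. (The paper first rounds $f$ to integers and uses multiplicities $f(v)$; your rescaled $m(v)=\lceil (Dd+1)f(v)/\lambda\rceil$ amount to the same thing.) You also correctly identify that this bound matches the target \eqref{mong} only at the bottom of the range, $\lambda \asymp K\|f\|_{\ell^n}$, and that a Nikishin--Maurey--Pisier--Stein reduction is needed for larger $\lambda$.

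The gap is that your NMPS paragraph is not a proof, and its description does not match any argument that actually closes the gap. The paper's reduction (Proposition~\ref{red}) does not construct ``an auxiliary weight on $F^{n-1}$'' or pass to ``a scaled-indicator situation''; it exploits the $F^{n-1}$-translation invariance of the hypotheses. One chooses $M$ equal to (roughly) $(\lambda/K_0\|f\|_{\ell^n})^n$ independent random shifts $u_1,\dots,u_M\in F^{n-1}$, sets $f_M(v):=\bigl(\sum_m f(v-u_m)^n\bigr)^{1/n}$ so that $\|f_M\|_{\ell^n}^n=M\|f\|_{\ell^n}^n$ and hence $K_0\|f_M\|_{\ell^n}\asymp\lambda$, and observes that each translated curve $\gamma_j+u_m$ still satisfies \eqref{distal} for $f_M$. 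A first-moment computation shows the random set $\{w_j+u_m:1\le j\le J,\ 1\le m\le M\}$ has size $\gg\min(MJ,|F|^{n-1})$, so the polynomial-method bound applied to $f_M$ at the endpoint $\lambda=K_0\|f_M\|_{\ell^n}$ yields \eqref{mong} after unwinding (the case $MJ>|F|^{n-1}$ being excluded by $\lambda\le|F|$). Finally, your invocation of real interpolation is misplaced: that argument is used downstream to deduce Theorem~\ref{alg-thm2} from this proposition, and plays no role in its proof.
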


The hypothesis and conclusion of this proposition are invariant under translations of $F^n$ by vectors in $F^{n-1}$.  We take advantage of this translation invariance to make a standard reduction (cf. \cite{borg:kakeya}, \cite{pisier}, \cite{stein}) to the case of small $\lambda$ (or equivalently, for large $J$):

\begin{proposition}[Nikishin-Maurey-Pisier-Stein factorization reduction]\label{red}  To prove Proposition \ref{dist}, it suffices to do so in the special case $\lambda = K_0 \|f\|_{\ell^n(F^n)} \leq |F|$ for some sufficiently large $K_0$ depending on $n$ and $d$.
\end{proposition}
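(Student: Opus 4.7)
The plan is to leverage the translation invariance under $F^{n-1}$ (which leaves both the hypotheses and conclusion of Proposition~\ref{dist} unchanged) to ``inflate'' any configuration with $\lambda$ much larger than the critical scale $K_0 \|f\|_{\ell^n(F^n)}$ into a new configuration at the critical scale, so that the assumed special case can be applied. This is the Nikishin-Maurey-Pisier-Stein factorization idea in our finite-field setting.

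Concretely, given data $(f,\{w_j\}_{j=1}^J,\{\gamma_j\}_{j=1}^J,\lambda)$ satisfying the hypotheses of Proposition~\ref{dist} with $\lambda \ge K\|f\|_{\ell^n(F^n)}$ and $\lambda \le |F|$, set $M := \lceil (\lambda/(K_0 \|f\|_{\ell^n(F^n)}))^n \rceil$ and pick $M$ translates $h_1,\dots,h_M\in F^{n-1}$ independently and uniformly at random. Form the amplified function $F := \sum_{i=1}^M f(\,\cdot\,-h_i)$ on $F^n$ together with the $MJ$ pairs $(w_j^{(i)}, \gamma_j^{(i)}) := (w_j+h_i,\gamma_j+h_i)$. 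Each $\gamma_j^{(i)}$ is an irreducible degree-$d$ curve through $w_j^{(i)}$ not contained in $F^{n-1}$, and using $f\ge 0$ and translation invariance,
\[
\sum_{v\in\gamma_j^{(i)}(F)\setminus F^{n-1}}F(v)\;\ge\;\sum_{v\in\gamma_j^{(i)}(F)\setminus F^{n-1}}f(v-h_i)\;=\;\sum_{v'\in\gamma_j(F)\setminus F^{n-1}}f(v')\;\ge\;\lambda.
\]

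The key probabilistic step is to produce translates $h_1,\dots,h_M$ for which
\[
\|F\|_{\ell^n(F^n)}^n \;\lesssim\; M\|f\|_{\ell^n(F^n)}^n
\qquad\text{and}\qquad
\#\{w_j^{(i)} : 1\le j\le J,\,1\le i\le M\}\;\ge\;\tfrac12 MJ.
\]
Both properties follow in principle from the facts that (i) $f\ge 1$ on its support forces $|\mathrm{supp}(f)|\le\|f\|_{\ell^n(F^n)}^n$, and (ii) $\lambda\le |F|$ keeps $M$ small enough for random shifts of $\mathrm{supp}(f)$ (and of the $w_j$) to overlap on only a negligible proportion of pairs, so that a first-moment computation plus a union bound produces the desired typical realization. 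Granting these, our choice of $M$ yields $\lambda\asymp K_0\|F\|_{\ell^n(F^n)}$, so the assumed critical-scale version of Proposition~\ref{dist} applied to the inflated data gives
\[
\tfrac12 MJ \;\ll\; \frac{|F|^{n-2}}{\lambda^{n-1}}\|F\|_{\ell^n(F^n)}^n\;\ll\; \frac{|F|^{n-2}\cdot M\|f\|_{\ell^n(F^n)}^n}{\lambda^{n-1}},
\]
and dividing by $M$ delivers the bound \eqref{mong}.

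The principal obstacle is executing the probabilistic step cleanly. The naive triangle inequality only yields $\|F\|_{\ell^n(F^n)}\le M\|f\|_{\ell^n(F^n)}$, which is an $M^{(n-1)/n}$ factor too weak; the sharper bound $\|F\|_{\ell^n(F^n)}^n\lesssim M\|f\|_{\ell^n(F^n)}^n$ requires genuine near-disjointness of the $M$ shifted copies of $\mathrm{supp}(f)$, and this only works cleanly when $M\,|\mathrm{supp}(f)|\lesssim|F|^{n-1}$, i.e., roughly when $\lambda\lesssim K_0|F|^{(n-1)/n}$. For $\lambda$ closer to $|F|$, $M$ must be chosen smaller than $(\lambda/K_0\|f\|_{\ell^n})^n$, and one must either iterate (bootstrapping against the trivial bound $J\le|F|^{n-1}$) or split the range of $\lambda\in[K\|f\|_{\ell^n(F^n)},|F|]$ and handle the high end by a direct argument (using the fact that mass $\lambda$ on a degree-$d$ curve forces $\|f\|_\infty\gtrsim \lambda/(d|F|)$, so that the $\ell^n$ norm constraint becomes very restrictive). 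Likewise, distinctness of the points $w_j^{(i)}$ follows from the union bound $\mathbb{E}[\text{collisions}]\le (MJ)^2/|F|^{n-1}$, together with an a posteriori verification that $MJ\ll|F|^{n-1}$.
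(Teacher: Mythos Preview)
Your overall strategy---random translation to inflate the configuration and apply the critical-scale case---is the same as the paper's. But there is one genuine missing idea, and it is precisely the one you flag as ``the principal obstacle'': you form the amplified function as the ordinary sum $F := \sum_{i=1}^M f(\cdot-h_i)$, and then struggle to get $\|F\|_{\ell^n}^n \lesssim M\|f\|_{\ell^n}^n$. The paper sidesteps this completely by instead taking the $\ell^n$-power-mean
\[
f_M(v) := \Bigl(\sum_{m=1}^M f(v-u_m)^n\Bigr)^{1/n}.
\]
This choice gives $\|f_M\|_{\ell^n(F^n)}^n = M\|f\|_{\ell^n(F^n)}^n$ \emph{exactly and deterministically}, for any choice of translates; no near-disjointness of supports, no probabilistic argument, no case-splitting on the size of $\lambda$ is needed. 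At the same time $f_M(v) \ge f(v-u_m)$ pointwise for each $m$ (since $(\sum a_i^n)^{1/n}\ge \max_i a_i$), so the lower bounds $\sum_{v\in(\gamma_j+u_m)(F)\setminus F^{n-1}} f_M(v)\ge\lambda$ persist; and $f_M$ still takes values in $\{0\}\cup[1,\infty)$. This one substitution removes your entire last paragraph of difficulties.

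A secondary difference: rather than trying to arrange that the $MJ$ translated points $w_j+u_m$ are essentially distinct (which would require $MJ\ll |F|^{n-1}$ a priori, something you cannot verify in advance), the paper simply computes $\E|\Omega|\asymp\min(MJ,|F|^{n-1})$ directly and then handles the two cases separately. If $MJ\le|F|^{n-1}$ one divides out $M$ as you do; if $MJ>|F|^{n-1}$, the conclusion of the critical-scale case forces $\lambda\gg K_0^n|F|$, contradicting the standing hypothesis $\lambda\le|F|$ once $K_0$ is large. This is cleaner than your proposed a posteriori verification or iteration.
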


\begin{proof}  Let $d,n,w_1,\ldots,w_J,\gamma_1,\ldots,\gamma_J,f,\lambda$ be as in Proposition \ref{dist}.  Let $K_0$ be as in Proposition \ref{red}, and assume $K$ sufficiently large depending on $d,n,K_0$.  Let $M$ be the greatest integer less than or equal to $\frac{\lambda^n}{K_0^n \|f\|_{\ell^n(F^n)}^n}$; thus $M \geq 1$ if $K$ is large enough.  

We use a probabilistic method. Let $u_1,\ldots,u_M \in F^{n-1}$ be selected independently and uniformly at random, and let $\Omega := \{ w_j + u_m: 1 \leq j \leq J, 1 \leq m \leq M \}$, thus $\Omega$ is a random subset of $F^{n-1}$.  Observe that every $w \in F^{n-1}$ lies in $\Omega$ with probability
$$ 1 - (1-\frac{J}{|F|^{n-1}})^M \asymp \min( \frac{MJ}{|F|^{n-1}}, 1 ) $$
The expected size of $\Omega$ is thus
$$ \E |\Omega| \asymp \min( MJ, |F|^{n-1} ),$$
and so we may select $u_1,\ldots,u_M$ such that
\begin{equation}\label{om}
|\Omega| \gg \min( MJ, |F|^{n-1} ).
\end{equation}
Fix these $u_1,\ldots,u_M$, and define 
$$ f_M(v) := (\sum_{m=1}^M f(v-u_m)^n)^{1/n}.$$
Then $f_M$ clearly takes values in $\{0\} \cup [1,+\infty)$, and
\begin{equation}\label{mmm}
 \|f_M\|_{\ell^n(F^n)}^n = \sum_{m=1}^M \| f(\cdot - u_m)\|_{\ell^n(F^n)}^n = M \|f\|_{\ell^n(F^n)}^n
 \end{equation}
and thus (by construction of $M$)
$$ K_0 \|f_M\|_{\ell^n(F^n)} \leq \lambda \ll K_0 \|f_M\|_{\ell^n(F^n)} .$$
On the other hand, since $f_M(v) \geq f(v-u_m)$ for every $1 \leq m \leq M$, we also have
$$ \sum_{v \in \gamma_j(F)+u_m \backslash F^{n-1}} f_M(v) \geq \lambda$$
for all $1 \leq j \leq J$ and $1 \leq m \leq M$.  Applying Proposition \ref{red} with $f$ replaced by $f_M$, $\lambda$ replaced by  $K_0 \|f_M\|_{\ell^n(F^n)}$, and $w_1,\ldots,w_J$ replaced by an enumeration of $\Omega$, we conclude that
$$
|\Omega| \ll \frac{|F|^{n-2}}{(K_0 \|f_M\|_{\ell^n(F^n)})^{n-1}} \sum_{v \in F^n} f_M(v)^n\ll \frac{|F|^{n-2}}{\lambda^{n-1}} \sum_{v \in F^n} f_M(v)^n$$
and hence by \eqref{om}, \eqref{mmm} we have
$$
\min( MJ, |F|^{n-1} ) \ll \frac{|F|^{n-2}}{\lambda^{n-1}} M \|f\|_{\ell^n(F^n)}^n.
$$
If $MJ \leq |F|^{n-1}$ then we obtain \eqref{mong} as required.  If instead $MJ > |F|^{n-1}$, then we have
$$
|F|^{n-1} \ll \frac{|F|^{n-2}}{\lambda^{n-1}} M \|f\|_{\ell^n(F^n)}^n.
$$
Using the definition of $M$, we conclude that
$$ \lambda \gg K_0^n |F|.$$
But this contradicts the hypothesis $\lambda \leq |F|$, if $K_0$ is large enough.
\end{proof}

It remains to prove Proposition \ref{dist} in the case $\lambda = K_0 \|f\|_{\ell^n(F^n)}$ for some large $K_0 = K_0(n,d)$.

We begin with some simple reductions.  Rounding $f$ down to the nearest integer (modifying $\lambda$ and $K_0$ appropriately) we may now assume that $f$ is integer-valued.  Since $\lambda \leq |F|$, we may replace $f$ with $\min(f,|F|)$ without affecting the property \eqref{distal}.  So we have reduced to the case where $f$ takes values in the set $\{0,1,\ldots,|F|\}$.  Our task is to show that
\begin{equation}\label{jb}
J \ll |F|^{n-2} \|f\|_{\ell^n(F^n)}.
\end{equation}

To establish this, we use the polynomial method of Dvir \cite{dvir} (or more precisely the weighted refinement of this method as used in \cite{saraf}, \cite{dvir2}).  
Let $1 \leq D < |F|$ be an integer to be chosen later, and consider the $F$-vector space ${\mathcal P}_D$ of polynomials on $F^n$ of degree at most $d$.  Then
\beq
\dim_F {\mathcal P}_D = \binom{n+D}{D} \asymp d^n.
\eeq
Now let $V_f$ be the subspace of polynomials $P$ in ${\mathcal P}_D$ such that for every $v \in F^n$, $P$ vanishes to order at least $f(v)$ at $v$ (i.e. the multivariate Taylor expansion of $P$ at $v$ contains no nonzero terms of degree less than $f(v)$ in the coordinates $x-v$).  The condition at each $v$ imposes $O(f(v)^n)$ linear conditions on ${\mathcal P}_D$; thus, summing over $v$, we find that
\beq
\dim_F {\mathcal P}_D - \dim_F V_f \ll \sum_{v \in F^n} f(v)^n = \|f\|_{\ell^n(F^n)}^n.
\eeq
In particular, if we choose $D$ equal to a suitably large multiple of $\|f\|_{\ell^n(F^n)}$, the dimension of $V_f$ is positive, so we can choose a nonzero $P$ in $V_f$.

Let $x_n$ be the defining function of $F^{n-1}$ in $F^n$.  We can factor $P = x_n^j Q$ for some $j \geq 0$, where $Q$ is a polynomial which does not contain $x_n$ as a factor.  Of course, $Q$ also has degree at most $d$, and vanishes with degree at least $f(v)$ for any $v \in F^n \backslash F^{n-1}$.

Now let $\gamma_j$ be one of the curves in Proposition \ref{dist}.  We claim that this curve is contained in the algebraic hypersurface $\{Q=0\}$.  We emphasize that we have the geometric, not the combinatorial notion of containment in mind; that is, we are asserting that the restriction of $Q$ to $\gamma_j$ is the zero function on the algebraic curve $\gamma_j$, not merely that it vanishes on every $F$-rational point of $\gamma_j$.

Suppose on the contrary that $\gamma_j$ is not contained in the vanishing locus of $Q$.  Then the algebraic set $\{ v \in \gamma_j: Q(v) = 0 \}$ would have dimension zero, and by Bezout's theorem (Lemma \ref{bezout}) would have degree $O(d) = O( \|f\|_{\ell^n(F^n)} )$.  On the other hand, by construction this set contains $v$ with multiplicity at least $f(v)$ for each $v \in \gamma_j(F) \backslash F^{n-1}$, thus
$$ \sum_{v \in \gamma_j(F) \setminus F^{n-1}} f(v) \ll \|f\|_{\ell^n(F^n)}.$$
But this contradicts \eqref{distal} and the choice of $\lambda$, if $K_0$ is large enough, and the claim follows.

We have shown that the hypersurface $\{Q=0\}$ contains $\gamma_j$ for each $1 \leq j \leq J$, and in particular contains the points $w_1,\ldots,w_J$.  Thus,
$$ |\{ w \in F^{n-1}: Q(w) = 0 \}| \geq J.$$
On the other hand, by construction $Q$ restricts to a non-trivial polynomial on $F^{n-1}$ of degree at most $d$.  By Lemma \ref{size-lem} we have
$$ J \ll |F|^{n-2} D$$
and \eqref{jb} follows by our choice of $d$.  The proof of Proposition \ref{dist} and (thus Theorem \ref{alg-thm2}) is now complete.

\section{The general case}\label{general-sec}

We now prove Theorem \ref{alg-thm}.  As in the statement of the theorem, all implied constants are allowed to depend on $N, d, n$.   The case $n=1$ is trivial (note that $|W(F)| = O(1)$ in this case), so we shall assume $n \geq 2$.  We may also assume that $|F|$ is large compared with $N, n, d$, as the claim is trivial otherwise.

We begin with the simple observation that the projective case of Theorem \ref{alg-thm} follows from the affine case by a random covering argument, as follows.  Let $T: \P^N \to \P^N$ be a random projective transformation (with coefficients in $F$).  Applying $T$ to all the quantities in Theorem \ref{alg-thm}, restricting from projective space $\P^N$ to affine space $\A^N$ and then applying the affine case of that theorem, we see that
$$
\| \sup_{T\gamma \ni Tw} \sum_{v \in T\gamma(F) \cap F^N} |f \circ T^{-1}(v)| \|_{\ell^n(T W(F) \cap F^N)} \ll |F|^{(n-1)/n} \|f \circ T^{-1}\|_{\ell^n(F^N)},$$
which we rearrange as
$$
\| \sup_{\gamma \ni w} \sum_{v \in \gamma(F) \cap T^{-1} F^N} |f(v)| \|_{\ell^n(W(F) \cap T^{-1} F^N)}^n \ll |F|^{n-1} \|f \|_{\ell^n(T^{-1} F^N)}^n.$$
Taking expectations over all $T$, we conclude that the projective case of Theorem \ref{alg-thm} does indeed follow from the affine case as claimed.

It remains to establish the case when $W \subset \A^N$ is an affine variety. For each irreducible curve $\gamma$ in $\A^N$ of degree $d$, we can write

$$ \sum_{v \in \gamma(F)} f(v) = \sum_{v \in \gamma(F) \backslash W(F)} f(v) + \sum_{v \in \gamma(F) \cap W(F)} f(v).$$
To control the latter sum, observe from Bezout's theorem (Lemma \ref{bezout}) that $\gamma(F) \cap W(F)$ has cardinality $O(1)$ (note that we assume $\gamma$ does not lie in $W$), and thus
$$ \sum_{v \in \gamma(F) \cap W(F)} f(v) \ll \|f\|_{\ell^n(F^N)}.$$
Applying Lemma \ref{size-lem}, we have $|W(F)| \ll |F|^{n-1}$ and so we see that the contribution of this term to is negligible.  

It remains to show that
$$
\| \sup_{\gamma \ni w} \sum_{v \in \gamma(F) \backslash W(F)} |f(v)| \|_{\ell^n(W(F))} \leq C_{N,d,n} |F|^{(n-1)/n} \|f\|_{\ell^n(F^N)}.$$
Clearly we may now drop the assumption that $\gamma$ is not contained in $W$.

By Lemma \ref{subdef} we may write
$$ W = \{ v \in \A^N: P_1(v) = \ldots = P_K(v) = 0\}$$
for $O(1)$ polynomials $P_1,\ldots,P_K$ on $\A^N$ of degree $O(1)$, such that each locus $\{P_k=0\}$ is a hypersurface containing $W$.  It will suffice to show for each $k$ that
$$
\| \sup_{\gamma \ni w} \sum_{v \in \gamma(F) \backslash \{P_k=0\}} |f(v)| \|_{\ell^n(W(F))} \ll |F|^{(n-1)/n} \|f\|_{\ell^n(F^N)}$$
since the desired claim then follows from the triangle inequality.

Next, we can reduce to the case when $P_k$ is the vertical coordinate function $x_N$.  Indeed, given any $P_k$, we can realize $W, \gamma$ as subvarieties of $\A^{N+1}$ by composing the given inclusion in $\A^N$ with the ``graphing'' map $\A^N \ra \A^{N+1}$ defined by  $x \mapsto (x,P_k(x))$.  Since $P_k$ has degree $O(1)$, it is clear that the images of $W, \gamma$ in $\A^{N+1}$ are again varieties of degree $O(1).$ Incrementing $N$ by $1$,  we may now assume that $W \subset \A^{N-1}$.

It thus suffices to show that
\begin{equation}\label{gif}
\| g \|_{\ell^n(W(F))} \ll |F|^{(n-1)/n} \|f\|_{\ell^n(F^N)}
\end{equation}
where $W$ is contained in $\A^{N-1}$, and $g: W(F) \to \R^+$ is the function
\begin{equation}\label{gdef}
g(w) :=  \sup_{\gamma \ni w} \sum_{v \in \gamma(F) \backslash F^{N-1}} |f(v)|.
\end{equation}

%We may assume that $V$ is not contained in $\A^{N-1}$, as the claim is trivial otherwise.
%Since $V$ is irreducible, we see that $V \cap \A^{N-1}$ has dimension at most $n-1$, and so by Lemma \ref{size-lem} we have
%\begin{equation}\label{volt}
%|V(F) \cap F^{N-1}| \ll |F|^{n-1}.
%\end{equation}

We now apply another random projection trick; the idea is to ``flatten'' the $n-1$-dimensional variety $W$ by a linear projection $F^{N-1} \ra F^{n-1}$, thus returning us to the situation of the previous section.  The main point is that a sufficiently generic projection from $W(F)$ to $F^{n-1}$ will have, in a sense, bounded fibers on average. 

Let $T: F^{N-1} \to F^{n-1}$ be a random surjective linear map; we extend this map (by abuse of notation) to the linear map $T: F^N \to F^n$ by defining $T(w,v_N) := (Tw, v_N)$ for $w \in F^{N-1}$ and $v_N \in F$.  We introduce the functions $f_T: F^n \to \R^+$ and $g_T: F^{n-1} \to \R^+$ by the formulae
\begin{equation}\label{fat-def}
f_T(x) := (\sum_{v \in F^N: T(v) = x} |f(v)|^n)^{1/n}
\end{equation}
and
\begin{equation}\label{gt-def}
 g_T(y) := \sup_{w \in W(F): T(w) = y}  g(w)
\end{equation}
for all $y \in F^{n-1}$ and $x \in F^n$, with the convention that the supremum over $w$ in \eqref{gt-def} is zero if no $w$ of the required form exist.

\begin{lemma}\label{projpe-lem} For all $T$, we have
\begin{equation}\label{gtft}
\| g_T \|_{\ell^n(F^{n-1})} \ll |F|^{(n-1)/n} \|f_T\|_{\ell^n(F^n)}
\end{equation}
for all $T$.  
\end{lemma}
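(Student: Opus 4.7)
The strategy is to establish the pointwise bound
\beq
g_T(y) \ll f_T^*(y) + |F|^{(n-1)/n} \max_{t \in F} f_T((y,t))
\eeq
for every $y \in F^{n-1}$, where $f_T^*$ is the maximal function of Theorem \ref{alg-thm2} applied to $f_T$. Raising to the $n$-th power, summing over $y$, and using the elementary inequality $\max_t a_t^n \le \sum_t a_t^n$ reduces the proof of \eqref{gtft} to Theorem \ref{alg-thm2} applied to $f_T$.

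To produce the pointwise bound, fix $y$ and (by finiteness) select a point $w \in W(F)$ with $Tw = y$ together with an irreducible curve $\gamma \subset \A^N$ of degree $\le d$ through $w$ that realizes $g_T(y)$ (if no such $w$ exists the bound is trivial). Using $W \subset \A^{N-1}$, view $w$ as $(w,0) \in F^N$; the extended $T$ then sends $w$ to $(y,0) \in F^{n-1} \subset F^n$. Split by whether $T|_\gamma$ is constant. If it is non-constant, then $T\gamma$ is an irreducible curve of degree $\le d$ in $F^n$ through $(y,0)$, hence eligible for $f_T^*(y)$. Since the extended $T$ preserves the last coordinate, $v \in F^{N-1}$ if and only if $T(v) \in F^{n-1}$, so the sum defining $g_T(y)$ can be written as a double sum over $x \in T\gamma(F)\setminus F^{n-1}$ and $v \in \gamma(F)\cap T^{-1}(x)$. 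Each fiber $\gamma\cap T^{-1}(x)$ is the intersection of a curve of degree $\le d$ with an affine linear subspace of codimension $n$; since $T|_\gamma$ is non-constant this intersection is zero-dimensional, so Bezout (Lemma \ref{bezout}) bounds its $F$-points by $O(1)$. Applying Hölder's inequality on each inner sum, followed by the trivial inequality $\sum_{v\in\gamma(F),\ T(v)=x}f(v)^n \le f_T(x)^n$, yields $g_T(y) \ll f_T^*(y)$.

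If instead $T|_\gamma$ is constant, then $T\gamma$ is a single point $(y,t)$. The case $t=0$ forces $\gamma \subset F^{N-1}$ and hence $g_T(y) = 0$. If $t \neq 0$, the curve $\gamma$ lies in the affine fiber $T^{-1}((y,t))$ of dimension $N-n$, and has $|\gamma(F)| \ll |F|$ by Lemma \ref{size-lem}. A single application of Hölder gives
\beq
g_T(y) \le |\gamma(F)|^{(n-1)/n}\bigl(\textstyle\sum_{v \in \gamma(F)} f(v)^n\bigr)^{1/n} \ll |F|^{(n-1)/n} f_T((y,t)),
\eeq
which is absorbed by the maximum over $t$ in the target pointwise bound.

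The main obstacle is the degenerate constant case, in which the pushforward strategy fails to produce a genuine curve and the reduction to Theorem \ref{alg-thm2} breaks down. Fortunately the collapse works in our favor: such a $\gamma$ must lie in a single fiber of $T$, which already has only $O(|F|)$ $F$-points, so the crudest Hölder estimate produces precisely the factor $|F|^{(n-1)/n}$ needed to match the right-hand side of \eqref{gtft} against the $\ell^n$ norm of $f_T$.
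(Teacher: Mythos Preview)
Your argument is correct and follows the paper's approach of establishing a pointwise bound and invoking Theorem~\ref{alg-thm2}. Note, however, that your constant case with $t \neq 0$ cannot occur---since $\gamma$ contains $w$ and $T(w)=(y,0)$, a constant $T|_\gamma$ forces $t=0$---so the extra term $|F|^{(n-1)/n}\max_t f_T((y,t))$ is unnecessary; the paper simply observes that $g_T(y)\neq 0$ forces $T(\gamma(F))$ to contain both $(y,0)$ and a point with nonzero last coordinate, ruling out the constant case outright and yielding the cleaner bound $g_T(y)\ll f_T^*(y)$.
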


\begin{proof}
In view of Theorem \ref{alg-thm2}, it suffices to establish the pointwise estimate
$$ g_T(y) \ll f_T^*(y)$$
for all $y \in F^{n-1}$, where the degree $d$ appearing in the definition of $f_T^*$ is $O(1)$.

Fix $y$.  We may of course assume that $g_T$ is non-zero, which implies by \eqref{gt-def} that there exists $w \in W(F)$ with $T(w)=y$ and an irreducible curve $\gamma$ in $\A^N$ passing through $w$ of degree at most $d$, such that
$$ \sum_{v \in \gamma(F) \backslash F^{N-1}} |f(v)| = g_T(y).$$
Since $g_T$ is non-zero, we see that $\gamma(F)$ is not contained in $F^{N-1}$.  Since $\gamma$ contains $w \in F^{N-1}$, we see that the image of $\gamma(F)$ in $F^n$ contains at least two distinct points; in particular, it is not constant.  It follows from Bezout's theorem (Lemma \ref{bezout}) that
$$ |\gamma \cap T^{-1}(y')| \ll 1$$
for any $y' \in F^n$.  From this and \eqref{fat-def} we see that
$$ \sum_{v \in \gamma(F) \backslash F^{N-1}} |f(v)| \ll \sum_{x \in T(\gamma(F)) \backslash F^{n-1}} |f_T(x)|.$$
By Lemma \ref{proj}, $T(\gamma(F))$ is contained in an irreducible algebraic curve in $\A^n$ of degree $O(1)$; this curve passes through $y$, and the claim follows.
\end{proof}

We raise \eqref{gtft} to the $n^{th}$ power and take expectations to conclude
$$
\E \| g_T \|_{\ell^n(F^{n-1})}^n \ll |F|^{n-1} \E \|f_T\|_{\ell^n(F^n)}^n.$$
By construction, $\|f_T\|_{\ell^n(F^n)} = \|f\|_{\ell^n(F^N)}$, and so to show \eqref{gif} we only need to show that
$$ 
\| g \|_{\ell^n(W(F))}^n \ll \E \| g_T \|_{\ell^n(F^{n-1})}^n.$$
By using distributional formulae such as \eqref{distrib} and linearity of expectation, it suffices to show that
$$ | \{ w \in W(F): g(w) \ge \lambda \}| \ll \E | \{ y \in F^{n-1}: g_T(y) \ge \lambda \} |$$
for each $\lambda > 0$.  

Fix $\lambda$.  If we denote the set on the left-hand side as $\Omega$, then the set on the right-hand side is $T(\Omega)$.
It thus suffices to show that $|T(\Omega)| \gg |\Omega|$ with probability $\gg 1$.
From the Cauchy-Schwarz inequality we have
$$ |\{ (w, w') \in \Omega \times \Omega: T(w) = T(w') \}| \gg \frac{|\Omega|^2}{|T(\Omega)|}$$
and so it suffices by Markov's inequality to show that
$$ \E |\{ (w, w') \in \Omega \times \Omega: T(w) = T(w') \}| \ll |\Omega|.$$
The contribution of the diagonal case $w=w'$ is clearly acceptable, so we may impose the condition $w \neq w'$.  But for any $w \neq w'$, the probability that $T(w)=T(w')$ is just the probability that $w-w'$ is in the kernel of $T$, which is $O(|F|^{1-n})$.  So, by linearity of expectation, the contribution of the non-diagonal pairs $(w,w')$ is $O( |\Omega|^2 |F|^{1-n} )$, which is acceptable since $|W(F)| \ll F^{n-1}$.   The proof of Theorem \ref{alg-thm} is complete.

\begin{remark}  One can also apply Dvir's polynomial method directly to the algebraic variety $W$ without performing the flattening trick.  This suffices to establish results such as Corollary \ref{setcor} when the number $J$ of curves is large (e.g. $J \asymp |F|^{n-1}$), but does not seem to give optimal results when $J$ is small.  In case $W=\A^{n-1}$, we made critical use of the large automorphism group of $W$ in Proposition \ref{red} in order to reduce the small $J$ case to the large $J$ case by superimposing many translations of $f$ by random elements of $\Aut(W)$.  When $W$ is a more general variety, its automorphism group is typically trivial, and we do not know of any substitute for the method of Proposition~\ref{red}.

We remark that this is not so different from Lemma~\ref{size-lem}.  In general, the estimation of the number of $F$-rational points on a variety $W$ over $F$ is a very difficult problem, requiring some knowledge of the cohomology of $W$ with its Frobenius action.  But in cases where $W$ has a large automorphism group -- for instance, when $W$ is a homogeneous space, or in particular affine space -- it can be quite easy to compute $W(F)$ exactly.  So if we are content to compute $W(F)$ up to a multiplicative constant, as in Lemma~\ref{size-lem}, we need only flatten $W$ down to affine space and do our combinatorics on the latter, much simpler variety.

%Instead, we make use of the fact that the flattening trick destroys some of the algebraic structure of the variety, but retains most of the combinatorial structure we are interested in (up to loss of multiplicative constants.) and so it is only safe to apply this trick \emph{before} heavily algebraic tools such as the polynomial method are applied.

\end{remark}

\section{Variants}\label{variants-sec}

We now consider some variants and generalizations of the above results.  We will not always strive for maximal generality here, instead giving a sample of possible directions in which the above methods can be pushed.  As a consequence, some details will be omitted in the discussion.

\subsection{Reducibility}

In Theorem \ref{alg-thm}, $W$ was assumed to be a variety, and thus irreducible.  However, one can clearly generalize to the case when $W$ is the union of a bounded number of varieties of bounded degree, with the irreducible components of $W$ having dimension $n-1$.  
% algebraic sets of the specified dimension cut out by a bounded number of polynomials of bounded degree, using Lemma \ref{complex} to partition $V$ and $W$ into irreducible varieties.  (Some of the components may have dimension strictly less than that of $V$ or $W$, but one can easily check that this case will work in one's favor.)  We leave a precise formulation of this generalization to the reader.  Note how it is convenient here that in the statement of Theorem \ref{alg-thm}, $W$ and $\gamma$ do not need to be contained in $V$.

One cannot, of course, relax the requirement that the curve $\gamma$ be irreducible.  If we did so, reducible curves of the form $\gamma \cup \gamma_0$ for each $w \in W(F)$, where $\gamma$ goes through $w$ and $\gamma_0$ is a fixed curve independent of $w$ on which $f$ is large, would easily contradict \eqref{kakeq}.    

%\footnote{It is perhaps conceivable that one could deal with reducible curves so long as they are geometrically connected, though it is still likely that the estimate \eqref{kakeq} would not survive as stated.}

\subsection{Restricted Kakeya maximal functions}
As we saw in Remark~\ref{rem:ndg}, the condition that each curve $\gamma$ is not contained in $W$ is an ingredient necessary to obtain the full range of estimates. By examining the argument in Section~\ref{general-sec}, one sees that if this nondegeneracy condition is further strengthened then we can slightly relax the condition that $W$ is an algebraic set while preserving the full range of estimates.
    
\begin{theorem} Let $N, d, n \geq 1$, let $F$ be a finite field, let $V = \P^N$ (resp. $V = \A^N$), let $U  \subset V$ be a projective (resp. affine) variety of any dimension and of degree at most $d$, and let $W$ be any subset of $U(F)$. Then for any $f: V(F) \to \R$ we have
\begin{equation}
\| \sup_{\gamma \ni w} \sum_{v \in \gamma(F)} |f(v)| \|_{\ell^n(W)} \leq C_{N,d,n} \max(|W|,|F|^{n-1})^{\frac{1}{n}} \|f\|_{\ell^n(V(F))}.
\end{equation}
Here, $w$ ranges over $W$, and for each $w \in W$, the range of the supremum is the set of all irreducible projective (resp. affine) algebraic curves $\gamma$ in $V$ of degree at most $d$, which contain $w$ but are not contained in $U$. The constant $C_{N,d,n} > 0$ depends only on $N,d,n$.
\end{theorem}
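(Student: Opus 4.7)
The plan is to re-run the proof of Theorem~\ref{alg-thm} from Section~\ref{general-sec}, but with a two-region distributional argument at the end to replace the single inequality $|\Omega|\ll|T(\Omega)|$ used there, since $|\Omega|$ can now exceed $|F|^{n-1}$.

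The initial reductions carry over essentially verbatim. A random projective transformation handles the projective case. The strengthened nondegeneracy $\gamma\not\subset U$ together with Bezout (Lemma~\ref{bezout}) gives $|\gamma(F)\cap U(F)| = O(1)$, so the contribution of these points to the $\ell^n(W)$ norm is at most $|W|^{1/n}\|f\|_{\ell^n(F^N)}\le\max(|W|,|F|^{n-1})^{1/n}\|f\|_{\ell^n(F^N)}$, which is acceptable. Writing $U=\bigcap_{k=1}^{K}\{P_k=0\}$ via Lemma~\ref{subdef} with $K=O(1)$, the triangle inequality reduces us to one $k$ at a time, and the graphing trick lets us assume $W\subseteq F^{N-1}\subset F^N$ with $\gamma$ ranging over curves not contained in the coordinate hyperplane $F^{N-1}$. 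Apply the random linear projection $T:F^{N-1}\to F^{n-1}$ (extended to $F^N\to F^n$) exactly as in Section~\ref{general-sec}, and define $f_T$, $g_T$ via \eqref{fat-def}, \eqref{gt-def}; Lemma~\ref{projpe-lem} then provides $\E\|g_T\|_{\ell^n(F^{n-1})}^n \ll |F|^{n-1}\|f\|_{\ell^n(F^N)}^n$.

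Now write $\|g\|_{\ell^n(W)}^n = n\int_0^\infty\alpha^{n-1}|\Omega_\alpha|\,d\alpha$ with $\Omega_\alpha := \{w\in W: g(w)\ge\alpha\}$, and split the integral at $\alpha_0 := K_0\|f\|_{\ell^n(F^N)}$, where $K_0 = K_0(N,d,n)$ is a large constant to be chosen. For $\alpha\le\alpha_0$ I use the trivial bound $|\Omega_\alpha|\le|W|$, giving contribution $\ll|W|\|f\|_{\ell^n(F^N)}^n$. For $\alpha>\alpha_0$ I claim $|\Omega_\alpha|\le|F|^{n-1}$; granting this, the Cauchy--Schwarz/probabilistic argument of Section~\ref{general-sec} yields $|\Omega_\alpha|\ll\E|T(\Omega_\alpha)|$, and integrating in $\alpha$ gives contribution $\ll\E\|g_T\|_{\ell^n(F^{n-1})}^n\ll|F|^{n-1}\|f\|_{\ell^n(F^N)}^n$. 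Summing the two contributions produces the desired bound.

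The main obstacle is the claim $|\Omega_\alpha|\le|F|^{n-1}$ for $\alpha>K_0\|f\|_{\ell^n(F^N)}$, which I would establish by contradiction. If $|\Omega_\alpha|>|F|^{n-1}$, then the same Cauchy--Schwarz lower bound from Section~\ref{general-sec} still yields $|T(\Omega_\alpha)|\gg|F|^{n-1}$ with probability $\gg 1$, hence $\E|T(\Omega_\alpha)|\gg|F|^{n-1}$. On the other hand, Markov's inequality applied to $g_T$ combined with Lemma~\ref{projpe-lem} gives $\E|T(\Omega_\alpha)|\le\E\|g_T\|_{\ell^n(F^{n-1})}^n/\alpha^n\ll|F|^{n-1}\|f\|_{\ell^n(F^N)}^n/\alpha^n$. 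Together these force $\alpha\ll\|f\|_{\ell^n(F^N)}$, contradicting $\alpha>K_0\|f\|_{\ell^n(F^N)}$ once $K_0$ is large enough. This pincer between a Cauchy--Schwarz projection lower bound on $|T(\Omega_\alpha)|$ and a Markov upper bound coming from Lemma~\ref{projpe-lem} is the one genuinely new ingredient beyond Section~\ref{general-sec}.
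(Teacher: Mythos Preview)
Your proposal is correct. The paper does not give a detailed proof of this theorem, merely remarking that it follows ``by examining the argument in Section~\ref{general-sec}''; the implicit approach, visible in the closely analogous Nikodym variant in Section~\ref{niko-sec}, is more direct than yours. There one simply observes that the expected number of colliding pairs $(w,w')\in\Omega\times\Omega$ with $T(w)=T(w')$ is $|\Omega| + O(|\Omega|^2|F|^{1-n}) = O(|\Omega|\max(1,|W|/|F|^{n-1}))$, so Cauchy--Schwarz yields $|T(\Omega)| \gg |\Omega|\min(1,|F|^{n-1}/|W|)$ with probability $\gg 1$; integrating in $\alpha$ then gives $\|g\|_{\ell^n(W)}^n \ll \max(1,|W|/|F|^{n-1})\,\E\|g_T\|_{\ell^n(F^{n-1})}^n$ in one stroke, with no need to split into two ranges of $\alpha$ or to run a separate contradiction argument. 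Your two-region decomposition with the pincer bound on $|\Omega_\alpha|$ is a valid alternative, and your contradiction argument (Cauchy--Schwarz lower bound on $\E|T(\Omega_\alpha)|$ versus Markov upper bound from Lemma~\ref{projpe-lem}) is a nice touch, but it is doing more work than necessary: the single inequality $|T(\Omega)|\gg|\Omega|\min(1,|F|^{n-1}/|W|)$ already encodes both regimes at once.
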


\subsection{Nikodym sets and maximal functions}\label{niko-sec}

It is well known (see e.g. \cite{tao:boch-rest}) that Kakeya-type estimates imply ``Nikodym-type'' analogues, in which the $n-1$-dimensional variety $W$ is replaced by the $n$-dimensional variety $V$.  For instance, we have the following analogue of Theorem \ref{dvir-thm}, first observed in \cite{li}:

\begin{theorem}[Nikodym set conjecture for $F^n$]\label{nikodym-thm}  Let $n \geq 1$, let $F$ be a finite field, and let $E \subset F^n$ be a subset of $F^n$ such that for every $x \in F^n \backslash E$, there exists a line $\gamma$ passing through $x$ such that $\gamma \backslash \{x\} \subset E$.  Then $|E| \gg |F|^n$.
\end{theorem}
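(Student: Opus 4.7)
The plan is to run Dvir's polynomial method directly on $E$, with the Nikodym hypothesis playing the role that the Kakeya hypothesis plays in Theorem \ref{dvir-thm}. Specifically, I will find a nonzero low-degree polynomial $P$ vanishing on $E$, use the Nikodym line through each $x \in F^n \setminus E$ to force $P(x) = 0$, and then contradict the Schwartz--Zippel-type bound of Lemma \ref{size-lem}.

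Assume for contradiction that $|E| \leq c_n |F|^n$ for a small constant $c_n$ to be fixed. We may assume $|F|$ is sufficiently large in terms of $n$, since otherwise the claim is trivial: if $F^n \setminus E$ is nonempty and $|F| \geq 2$, the Nikodym hypothesis already forces $|E| \geq |F|-1$, while if $F^n \setminus E$ is empty then $|E|=|F|^n$. The standard dimension count
\beq
\dim_F \PP_D = \binom{n+D}{n} \geq (D/n)^n
\eeq
applied with $D := \lceil n |E|^{1/n} \rceil + 1$ produces a nonzero polynomial $P \in \PP_D$ satisfying the $|E|$ linear conditions $\{P(e)=0 : e \in E\}$. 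Choosing $c_n := (4n)^{-n}$ ensures $D < |F|-1$.

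Now fix any $x \in F^n \setminus E$ and let $\gamma$ be a line through $x$ with $\gamma \setminus \{x\} \subset E$. After parametrizing $\gamma$ affinely, $P|_\gamma$ is a univariate polynomial of degree at most $D$ vanishing at $|F|-1 > D$ points, so $P|_\gamma \equiv 0$; in particular $P(x)=0$. Hence $P$ vanishes on all of $F^n$, contradicting Lemma \ref{size-lem}: a nonzero polynomial of degree at most $D < |F|$ has at most $D |F|^{n-1} < |F|^n$ $F$-zeros. This forces $|E| > c_n |F|^n$.

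There is no serious obstacle in this approach; the only step requiring care is the arithmetic pinning down $c_n$ so that $D < |F|-1$. I remark that one could alternatively try to deduce Theorem \ref{nikodym-thm} from Corollary \ref{setcor} by extending each Nikodym line projectively to meet the hyperplane at infinity of $\P^n$ and taking $W$ to be that hyperplane; however, because distinct Nikodym lines can share the same direction, a pigeonhole bound on the number of distinct directions appearing yields only the suboptimal estimate $|E| \gg |F|^2$, so the direct polynomial argument is cleaner and sharper.
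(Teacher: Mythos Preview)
Your direct polynomial argument is correct: the dimension count produces a nonzero $P$ of degree $D < |F|-1$ vanishing on $E$, each Nikodym line forces $P$ to vanish at the missing point $x$, and Schwartz--Zippel (equivalently Lemma \ref{size-lem}) then gives the contradiction. The small-$|F|$ reduction and the choice $c_n = (4n)^{-n}$ are handled cleanly.

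Your route differs from the one the paper indicates. The paper does not prove Theorem \ref{nikodym-thm} from scratch; it attributes the result to \cite{li} and sketches a deduction from Theorem \ref{dvir-thm} via a projective transformation sending a hyperplane to infinity, so that the Nikodym lines (which cover all of $F^n \setminus E$ after the transformation) become a Kakeya family. Your argument is instead the self-contained polynomial method applied directly to the Nikodym hypothesis --- essentially the approach of \cite{li} itself. The advantage of your route is that it avoids the bookkeeping of the projective transformation (which, as the paper notes in a parallel context, misses a codimension-one set of points/directions and has to be patched by averaging over rotations); the advantage of the paper's route is that it makes the Kakeya--Nikodym duality explicit and feeds into the maximal-function versions discussed immediately afterward. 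Your closing remark about why Corollary \ref{setcor} with $W$ the hyperplane at infinity does not directly yield the result is also accurate: without control on how many Nikodym lines share a direction, that corollary alone gives only $|E| \gg |F|^2$.
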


To see the connection with Theorem \ref{dvir-thm}, note that after applying a projective transformation to a set $E$ of the form in Theorem \ref{nikodym-thm}, one gets something very close to a set $E$ of the form in Theorem \ref{dvir-thm}.  The same projective transformation trick allows us to deduce the following maximal function estimate from Theorem \ref{max-thm}:

\begin{theorem}[Nikodym maximal conjecture for $F^n$]\label{max-thm2}  Let $n \geq 1$, let $F$ be a finite field, and let $f: F^n \to \R$ be a function.  Then
$$
\| \sup_{\gamma \ni x} \sum_{x' \in \gamma} |f(x')| \|_{\ell^n(F^n)} \ll |F| \|f\|_{\ell^n(F^n)}
$$
where the implied constant depends on $n$, and $\gamma$ ranges over lines through $x$.
\end{theorem}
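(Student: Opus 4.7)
The plan is to invoke projective duality between points and directions. A projective transformation of $\P^n(F)$ sending a projective hyperplane $W$ to the hyperplane at infinity identifies points of $W$ with directions in the target affine chart; under such a change of coordinates, the Nikodym-type supremum over projective lines through a point $w \in W$ becomes the Kakeya-type supremum over lines of direction $\phi_W(w)$. Theorem \ref{max-thm} thus gives a bound on the Nikodym maximal function restricted to any single projective hyperplane, and an averaging argument over all such hyperplanes upgrades this to the desired bound on all of $F^n$.

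In detail, for each projective hyperplane $W \subset \P^n$ I fix a projective transformation $\phi_W: \P^n \to \P^n$ with $\phi_W(W) = H_\infty$.  Extending $f$ by zero on $H_\infty$ and setting $\tilde f_W := |f| \circ \phi_W^{-1}$ on the target chart, one has $\|\tilde f_W\|_{\ell^n(F^n)} \leq \|f\|_{\ell^n(F^n)}$.  Under $\phi_W$, a projective line $\gamma$ through $w \in W$ with $\gamma \not\subset W$ corresponds to an affine line of direction $\phi_W(w) \in \P^{n-1}(F)$, and the two sums $\sum_{v \in \gamma \cap F^n} |f(v)|$ and $\sum_{y \in \phi_W(\gamma) \cap F^n} \tilde f_W(y)$ agree up to at most the single term $|f(w)|$.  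Applying Theorem \ref{max-thm} to $\tilde f_W$ yields
$$\sum_{w \in W(F)} L_W(w)^n \ll |F|^{n-1} \|f\|_{\ell^n(F^n)}^n, \qquad L_W(w) := \sup_{\gamma \ni w,\ \gamma \not\subset W} \sum_{v \in \gamma \cap F^n} |f(v)|.$$

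Next, I sum this over all projective hyperplanes of $\P^n(F)$, of which there are $\asymp |F|^n$.  A standard double count shows that each $w \in F^n$ lies on $\asymp |F|^{n-1}$ of these hyperplanes, while for each line $\gamma$ through $w$ only $O(|F|^{n-2})$ of these hyperplanes contain $\gamma$.  Hence, for the line $\gamma^\ast(w)$ realising the Nikodym supremum $L(w) := \sup_{\gamma \ni w} \sum_{v \in \gamma} |f(v)|$, at least $\Theta(|F|^{n-1})$ of the hyperplanes through $w$ satisfy $L_W(w) \ge L(w)$, so
$$|F|^{n-1}\sum_{w \in F^n} L(w)^n \le \sum_W \sum_{w \in W(F)} L_W(w)^n \ll |F|^{2n-1} \|f\|_{\ell^n(F^n)}^n,$$
and taking $n$th roots gives $\|L\|_{\ell^n(F^n)} \ll |F|\, \|f\|_{\ell^n(F^n)}$, as required.

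The main obstacle is the geometric bookkeeping for the projective change of coordinates: one must verify that $\phi_W$ indeed gives a bijection between projective lines through $w \in W$ not contained in $W$ and affine lines of direction $\phi_W(w)$, that passing between affine charts drops only a single boundary point per line (whose contribution $|f(w)|$ is trivially absorbed into the $\ell^n$ norm of $f$), and that extending $f$ by zero to $\P^n$ preserves the $\ell^n$ norm.  Once these routine checks are in place, the rest of the argument is exactly the double count on the incidence graph of points and hyperplanes described above.
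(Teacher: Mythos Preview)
Your argument is correct and is essentially the paper's own ``method of rotations'' proof: one establishes the Kakeya-type bound $\sum_{w \in W(F)} L_W(w)^n \ll |F|^{n-1}\|f\|_{\ell^n}^n$ on each hyperplane $W$ and then averages over all hyperplanes, using the incidence count that each point lies on $\asymp |F|^{n-1}$ hyperplanes while each line lies on only $O(|F|^{n-2})$. The only cosmetic difference is that you obtain the per-hyperplane bound from Theorem~\ref{max-thm} via an explicit projective change of coordinates sending $W$ to $H_\infty$, whereas the paper simply cites Theorem~\ref{alg-thm2} (which already has $W$ as an affine hyperplane) for the same bound.
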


The deduction of Theorem \ref{max-thm2} from Theorem \ref{max-thm} was established in the Euclidean case in \cite{tao:boch-rest}; the proof for finite fields is identical and is omitted.  Alternatively, one can deduce Theorem \ref{max-thm2} from Theorem \ref{alg-thm2} (or Theorem \ref{alg-thm}), which implies in particular that
$$
\| \sup_{\gamma \ni x} \sum_{x' \in \gamma \backslash W} |f(x')| \|_{\ell^n(W)}^n \ll |F|^{n-1} \|f\|_{\ell^n(F^n)}^n
$$
for all hyperplanes $W$ in $F^n$.  Averaging this over all $W$ (cf.~ the Calder\'on-Zygmund method of rotations\cite{cz}) yields the claim.  The same argument also yields the following Nikodym-type variant of Theorem \ref{alg-thm2}:

\begin{theorem}[Nikodym maximal conjecture for curves in $F^n$]\label{alg-thm2-nik}  Let $d, n \geq 1$.  For any $f: F^n \to \R$, we have
$$
\| \sup_{\gamma \ni x} \sum_{x' \in \gamma(F)} |f(x')| \|_{\ell^n(F^n)} \ll |F| \|f\|_{\ell^n(F^n)},
$$
where the supremum is over all irreducible algebraic curves $\gamma$ in $F^n$ of degree at most $d$ that pass through $x$, and the implied constant depends on both $n$ and $d$.
\end{theorem}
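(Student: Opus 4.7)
The plan is to deduce Theorem \ref{alg-thm2-nik} from Theorem \ref{alg-thm} applied (with $V = \A^n$) to every affine hyperplane of $F^n$, following the Calder\'on--Zygmund method of rotations sketched just above. Let $g(x) := \sup_{\gamma \ni x} \sum_{v \in \gamma(F)} |f(v)|$ denote the Nikodym maximal function, and for each $x \in F^n$ let $\gamma_x$ be a curve attaining this supremum. The case $n = 1$ is trivial, so we assume $n \geq 2$.

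The first step is to apply Theorem \ref{alg-thm} with $W$ ranging over the affine hyperplanes of $F^n$. Writing
\beq
f^*_W(w) := \sup_{\gamma \ni w,\ \gamma \not\subset W} \sum_{v \in \gamma(F) \setminus W(F)} |f(v)|,
\eeq
this yields $\sum_{w \in W(F)} f^*_W(w)^n \ll |F|^{n-1} \|f\|_{\ell^n(F^n)}^n$ for every such $W$. There are $\asymp |F|^n$ affine hyperplanes in $F^n$, each point lies in $\asymp |F|^{n-1}$ of them, and any two distinct points lie in $\asymp |F|^{n-2}$ of them. Summing the displayed bound over all $W$ and interchanging gives
\beq
\sum_{x \in F^n} \mathrm{avg}_{W \ni x}\, f^*_W(x)^n \ll |F|^n \|f\|_{\ell^n(F^n)}^n,
\eeq
which is already of the size needed in Theorem \ref{alg-thm2-nik}.

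The second step is a pointwise comparison of $g$ with this averaged Kakeya maximal function. For any $W \ni x$ with $\gamma_x \not\subset W$, Lemma \ref{bezout} gives $|\gamma_x \cap W| \leq d$, so splitting $\gamma_x(F)$ across $W$ yields
\beq
g(x) \leq f^*_W(x) + \sum_{v \in \gamma_x \cap W} |f(v)|.
\eeq
The affine span of the irreducible curve $\gamma_x$ has dimension $\geq 1$, so the hyperplanes through $x$ that contain $\gamma_x$ form only an $O(1/|F|)$-fraction of the hyperplanes through $x$. Raising the above inequality to the $n$-th power, isolating the $v = x$ term of the sum, using the power-mean inequality on the remaining at most $d-1$ terms, and averaging over $W \ni x$, one obtains
\beq
g(x)^n \ll \mathrm{avg}_{W \ni x}\, f^*_W(x)^n + |f(x)|^n + \mathrm{avg}_{W \ni x} \sum_{v \in (\gamma_x \cap W) \setminus \{x\}} |f(v)|^n.
\eeq

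Summed over $x \in F^n$, the first term is controlled by the first step, and $\sum_x |f(x)|^n = \|f\|_{\ell^n(F^n)}^n$ is trivially acceptable. For the off-diagonal error, for any $v \neq x$ the fraction of hyperplanes through $x$ also containing $v$ is $\asymp 1/|F|$ (such $W$ must contain the line $\overline{xv}$), so Fubini gives
\beq
\sum_{x \in F^n} \mathrm{avg}_{W \ni x} \sum_{v \in (\gamma_x \cap W) \setminus \{x\}} |f(v)|^n \ll \frac{1}{|F|} \sum_{x \in F^n} \sum_{v \in \gamma_x(F)} |f(v)|^n \leq |F|^{n-1} \|f\|_{\ell^n(F^n)}^n,
\eeq
which is of lower order than the main target. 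The main obstacle is the bookkeeping of the exceptional hyperplanes (those containing $\gamma_x$) and of this off-diagonal error; both are dissolved by the crude two-point incidence count for hyperplanes that furnishes the $1/|F|$ saving. Collecting the estimates yields $\|g\|_{\ell^n(F^n)} \ll |F| \|f\|_{\ell^n(F^n)}$, as required.
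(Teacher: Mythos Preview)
Your proof is correct and follows essentially the same approach as the paper's: you carry out in detail the Calder\'on--Zygmund ``rotations'' averaging over hyperplanes that the paper sketches in one line. The bookkeeping you supply for the exceptional hyperplanes containing $\gamma_x$ and for the off-diagonal error $\sum_{v \in (\gamma_x \cap W)\setminus\{x\}} |f(v)|^n$ is exactly the content the paper leaves implicit in the phrase ``Averaging this over all $W$ \ldots\ yields the claim.''
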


One can also establish this theorem directly by repeating the arguments in Section \ref{flat-sec} with some minor changes (for instance, it is no longer necessary to factor out all factors of $x_n$ from the polynomial $P$); we leave the details as an exercise for the interested reader.

The following is a Nikodym-type variant of Theorem \ref{alg-thm}. Since, in the Nikodym problem, there is no longer a natural non-degeneracy condition for the curves $\gamma$, the requirement that $W$ is an algebraic set may be dropped entirely.

\begin{theorem}[Nikodym maximal conjecture for subsets of $\P^N(F)$]\label{alg-thm-niko}  Let $N, d, n \geq 1$, let $F$ be a finite field, and let $W$ be any subset of $\P^N(F)$.  Then for any $f: \P^N(F) \to \R$ we have
$$
\| \sup_{\gamma \ni v} \sum_{v' \in \gamma(F)} |f(v')| \|_{\ell^n(W)} \ll \max(|W|,|F|^n)^{1/n} \|f\|_{\ell^n(\P^N(F))},$$
where for each $v \in W$, the supremum is over all irreducible projective (resp. affine) algebraic curves $\gamma$ of degree at most $d$ in $\P^N(F)$ that pass through $v$, and the implied constants depend on $N, d, n$.
\end{theorem}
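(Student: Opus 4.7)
The plan is to mirror the proof of Theorem~\ref{alg-thm} in Section~\ref{general-sec}, using Theorem~\ref{alg-thm2-nik} in place of Theorem~\ref{alg-thm2}, and to compensate for the absence of the dimension bound $|W(F)| \ll |F|^{n-1}$ available there by a partitioning argument. Following the start of Section~\ref{general-sec}, I would first reduce to the affine case $W \subset \A^N(F) = F^N$; the only new ingredient is that a projective curve $\gamma$ can meet the hyperplane at infinity in up to $d$ $F$-points, which contribute at most $\ll \|f\|_{\ell^\infty} \le \|f\|_{\ell^n}$ to the sum defining $g(w)$ and hence $\ll |W|\,\|f\|_{\ell^n}^n \le \max(|W|,|F|^n)\,\|f\|_{\ell^n}^n$ to $\|g\|_{\ell^n(W)}^n$, an acceptable loss. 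I would then partition $W$ into disjoint pieces $W_1, \ldots, W_k$ with $|W_i| \le |F|^n$ and $k \le \lceil |W|/|F|^n\rceil + 1$, reducing the theorem to the key estimate
\[ \|g\|_{\ell^n(W')} \ll |F|\,\|f\|_{\ell^n(F^N)} \]
for every $W' \subset F^N$ with $|W'| \le |F|^n$, where $g(v) := \sup_{\gamma \ni v} \sum_{v' \in \gamma(F)} |f(v')|$; summing $n$-th powers and using $k\,|F|^n \le 2\max(|W|,|F|^n)$ recovers the theorem.

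To prove the key estimate I would introduce a random linear map $T : F^N \to F^n$ with independent uniform entries, and the auxiliary functions
\[ f_T(x) := \Bigl(\sum_{v \in F^N : Tv = x} |f(v)|^n\Bigr)^{1/n}, \qquad g_T(y) := \sup_{v \in W' : Tv = y} g(v), \]
so that $\|f_T\|_{\ell^n(F^n)} = \|f\|_{\ell^n(F^N)}$. The technical heart of the argument is the pointwise inequality
\[ g_T(y) \ll f_T^*(y) + |F|^{(n-1)/n} f_T(y), \]
where $f_T^*$ is the Nikodym-type maximal function appearing in Theorem~\ref{alg-thm2-nik}. Fixing $y$ and choosing $v \in W'$ with $Tv = y$ together with an irreducible curve $\gamma_v \ni v$ of degree at most $d$ that realises $g_T(y) = g(v)$, Lemma~\ref{proj} implies that $T(\gamma_v)$ is either contained in an irreducible curve in $\A^n$ of degree $O(1)$ passing through $y$, or is the single point $\{y\}$. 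In the first case, Bezout's theorem (Lemma~\ref{bezout}) bounds every fibre $|\gamma_v(F) \cap T^{-1}(x)|$ by $O(1)$, and a H\"older argument inside each fibre gives $g(v) \ll \sum_{x \in T(\gamma_v)(F)} f_T(x) \le f_T^*(y)$. In the second case $\gamma_v \subset T^{-1}(y)$, so $|f(v')| \le f_T(y)$ for every $v' \in \gamma_v(F)$, and a single H\"older on $\gamma_v(F)$ (which has $O(|F|)$ points) yields $g(v) \le |\gamma_v(F)|^{(n-1)/n} f_T(y) \ll |F|^{(n-1)/n} f_T(y)$.

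Combining the pointwise bound with Theorem~\ref{alg-thm2-nik} applied to $f_T$, and absorbing the second term using $|F|^{(n-1)/n} \le |F|$, gives $\|g_T\|_{\ell^n(F^n)} \ll |F|\,\|f\|_{\ell^n(F^N)}$ for every $T$, so in particular $\E \|g_T\|_n^n \ll |F|^n \|f\|_n^n$. To pass from this back to $\|g\|_{\ell^n(W')}^n$ I would invoke the Cauchy--Schwarz projection trick from the end of Section~\ref{general-sec}: for each $\lambda > 0$, setting $\Omega_\lambda := \{v \in W' : g(v) \ge \lambda\}$ and observing $T(\Omega_\lambda) \subset \{y : g_T(y) \ge \lambda\}$, the hypothesis $|\Omega_\lambda| \le |W'| \le |F|^n$ combined with the identity $\E|\{(v,v') \in \Omega_\lambda^2 : v \ne v',\ Tv = Tv'\}| \le |\Omega_\lambda|^2/|F|^n \le |\Omega_\lambda|$ yields $\E|T(\Omega_\lambda)| \gg |\Omega_\lambda|$. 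Integrating against $n\lambda^{n-1}\,d\lambda$ and using the distributional identity \eqref{distrib} then gives $\|g\|_{\ell^n(W')}^n \ll \E\|g_T\|_n^n \ll |F|^n \|f\|_n^n$, which is the key estimate.

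The main obstacle is the pointwise inequality: unlike Lemma~\ref{projpe-lem}, no non-degeneracy condition rules out curves $\gamma_v$ that lie inside a fibre of $T$, forcing the ``bad'' term $|F|^{(n-1)/n} f_T(y)$ into the estimate. Fortunately this term is dominated in $\ell^n$-norm by the $f_T^*$ contribution and causes no loss. A secondary but important point is that the partitioning step is essential: the Cauchy--Schwarz trick only delivers $\E|T(\Omega_\lambda)| \gg \min(|\Omega_\lambda|,|F|^n)$ in general, and plugging this directly into the distributional integral for the whole of $W$ produces a spurious factor $\log|F|$, which partitioning into pieces of size at most $|F|^n$ eliminates entirely.
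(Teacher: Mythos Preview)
Your proposal is correct and follows the same overall strategy as the paper (random projection $T:F^N\to F^n$, then Theorem~\ref{alg-thm2-nik}, then the Cauchy--Schwarz collision argument), but it diverges from the paper in two tactical choices that are worth noting.

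First, the paper handles curves $\gamma$ that collapse under $T$ by \emph{excluding} them: it defines $h_T(w)$ as the supremum over only those $\gamma$ with $|T(\gamma(F))|\ge 2$, proves Lemma~\ref{projpe-lem} for this restricted maximal function, and then argues probabilistically that for a random surjective $T$ a given curve with $|\gamma(F)|\ge 2$ is collapsed with probability $O(|F|^{-n})$, so that the set $\Omega_T=\{w\in\Omega:h_T(w)=g(w)\}$ satisfies $|\Omega_T|\gg|\Omega|$ with high probability. You instead keep all curves and absorb the degenerate case into the extra pointwise term $|F|^{(n-1)/n}f_T(y)$ via H\"older on the fibre $\gamma_v(F)\subset T^{-1}(y)$; this is cleaner and entirely deterministic, and the extra term is harmless in $\ell^n$.

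Second, the paper obtains the factor $\max(|W|,|F|^n)$ in one stroke: running the Cauchy--Schwarz collision bound on all of $W$ gives $|T(\Omega)|\gg|\Omega|\min(1,|F|^n/|W|)$ with positive probability, and this factor propagates through the distributional integral directly. You instead partition $W$ into $O(\max(1,|W|/|F|^n))$ pieces of size at most $|F|^n$, so that on each piece the collision bound gives $|T(\Omega_\lambda)|\gg|\Omega_\lambda|$ cleanly; you then recover the $\max(|W|,|F|^n)$ from the number of pieces. Your worry that omitting the partition costs a $\log|F|$ is well-founded for your version of the argument, but the paper avoids it because the $\min(1,|F|^n/|W|)$ factor is independent of $\lambda$.

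Both routes work; yours trades the probabilistic curve-degeneracy argument for a short H\"older computation, at the price of the partitioning step.
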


To deduce this from Theorem \ref{alg-thm2-nik}, it is necessary to slightly modify the arguments in Section~\ref{general-sec}. First, the random surjective linear map $T:F^{N-1} \rightarrow F^{n-1}$ is replaced by a random surjective linear map $T:F^N \rightarrow F^n.$ We are then no longer able to guarantee, in Lemma~\ref{projpe-lem}, that the image under $T$ of each relevant curve $\gamma(F)$ contains at least two points. To remedy this, we 
define a function $h_T(y)$ identically to $g(y)$ except excluding in each $\sup$ the curves $\gamma$ mapped to only one point by $T$. Redefining $g_T(y) = \sup_{w \in W : T(w) = y} h_T(w)$, we 
prove the lemma with the new $g_T.$ Setting $\Omega_T = \{y \in \Omega : h_T(y) = g(y)\}$ one observes that with probability $\geq .6$ we have $|\Omega_T| \gg |\Omega|$ (here we must delete from $\Omega$ the points corresponding to curves with $|\gamma(F)| = 1,$ but these have negligible contribution to the final estimate as before). Since $|\{(w,w') \in \Omega_T \times \Omega_T : T(w) = T(w')\}| \ll |\Omega|\max(1,|W|/|F|^n)$ with probability $\geq .6$, we conclude that $|T(\Omega_T)| \gg |\Omega| \min(1,|F|^n/|W|)$ with probability $\geq .2$.

\subsection{Kakeya for blowups}

The methods of this paper also apply to the case where our Kakeya maximal function is defined not on the set of points $W(F)$, but on the set of {\em tangent directions} to points of $W(F)$; so that our supremum is not over all curves passing through a particular point $w$, but rather over all curves tangent at $w$ to a given line through $w$.

%Of course, by enlarging $W$ in some arbitrary fashion (e.g. replacing $W$ with $W \times \A^{k-1}$ or $W \times \P^{k-1}$ and enlarging the dimension $N$ of the ambient space appropriately) one sees that the estimate \eqref{kakeq} persists in this case.  However, this is not a particularly efficient estimate.

For simplicity, take $W \subset V=\P^N$ to be a smooth projective variety of dimension $n-k$, and let $E$ be a rank $k$ subbundle of the projectivized normal bundle $\P N_V W := \P( TV|_W / TW )$ of $W$ in $V$.  We can think of $E$ as a way of specifying, for each point $w$ of $W$, a $k$-dimensional subspace of the $(N-n+k)$-dimensional space of tangent vectors at $w$ which are normal to $W$.  (We also require that these subspaces ``vary algebraically'' with $w$.)

Then the {\em blowup} of $V$ along $W$ is a variety $\tilde V$ endowed with a map $\pi: \tilde V \ra V$ which is an isomorphism away from $\tilde W := \pi^{-1}(W)$.  Moreover, $\tilde{W}$ has dimension $N-1$ and is naturally identified with $\P N_V(W)$.  In particular, $E$ determines a closed subvariety of $\tilde{W}$ of dimension $n-1$.  For the basic facts about blowups used here, see \cite[\S II.7]{hart:ag}.

If $\gamma \subset V$ is an irreducible curve not contained in $W$, the preimage $\pi^{-1}(\gamma)$ is a union of some number of copies of $\tilde{W}$ with an irreducible curve $\tilde \gamma \subset \tilde V$, called the {\em strict transform} of $\gamma$.  If $w \in \gamma \cap W$, then the lifted point $\tilde w \in \tilde \gamma \cap \tilde W$ is essentially the tangent vector $\gamma'(w) \in T_w V$ of $\gamma$ at $w$, projected onto $\P N_V(W) \equiv \tilde W$ by the obvious map from $TV|_W$  to $\P N_V(W)$.

The variety $\tilde{V}$ can be embedded in some large projective space $\P^M$ in such a way that the strict transform of a curve of degree $O(1)$ again has degree $O(1)$.  (It suffices to observe that there is a very ample line bundle on $\tilde{V}$ of degree $O(1)$; this follows, for instance, from the explicit construction of a very ample line bundle in \cite[Prop II.7.10]{hart:ag}.)

%(As usual, we allow all implied constants to depend on $N$ and $n$). Then, as is well known\footnot{See {\bf need reference here} for all basic material about blowups.}, there exists a blown up variety $\tilde V$ of dimension $n$, embeddable in a projective space of dimension $O(1)$ with degree $O(1)$, and a projection morphism $\pi: \tilde V \to V$ which is an isomorphism away from $W$ and birational of degree $O(1)$ {\bf I hope I am using the right terminology here! - T.}.  Furthermore, the preimage $\tilde W := \pi^{-1}(W)$ is a codimension one subvariety of $\tilde V$; one can canonically identify $\tilde W$ with the projective normal bundle $\P N_V(W) := \P( TV|_W / TW )$ of $W$ in $V$, and one can show that this subvariety also has degree $O(1)$.  {\bf Maybe some Chow ring-fu is needed here? - T.}

%Now suppose that $\gamma$ is an irreducible curve in $V$ which is not contained in $W$.  Then, as is well known, there exists a unique lift $\tilde \gamma$ of $\gamma$ in $\tilde V$ with $\pi(\tilde \gamma)=\gamma$ which is an irreducible curve of degree $O(1)$ {\bf I *think* smoothness of $\gamma$ is not needed here - please check. -T}.  If $w \in \gamma \cap W$, then the lifted point $\tilde w \in \tilde \gamma \cap \tilde W$ is essentially the tangent vector $\gamma'(w) \in T_w V$ of $\gamma$ at $w$, projected onto $\P N_V(W) \equiv \tilde W$ by the obvious map $\phi: TV \to \P N_V(W)$.

If $e$ is a point of $E$, we say that a curve $\gamma$ passes through $e$ if its strict transform in $\tilde{V}$ passes through $e$, interpreted as a point of $\P^M$.  Without reference to the blowup, we can think of $e$ as a tangent direction to some point $w$ of $W$, and to say $\gamma$ passes through $e$ is to say it passes through $w$ along the given tangent direction.

Applying Theorem \ref{alg-thm} to the pair $(\P^M, E)$, we obtain the following high codimension variant.

\begin{theorem}[Kakeya maximal conjecture for algebraic varieties over $F$, blowup version]\label{alg-thm-high}  Let $N, d, n, k \geq 1$, let $F$ be a finite field, let $W \subset \P^N$ be a smooth projective variety of dimension $n-k$ and degree at most $d$, and let $E$ be a rank $k$ subbundle of the normal bundle of $W$ in $\P^N$.  Then for any $f: V \to \R$ we have
$$
\| \sup_{\gamma // e} \sum_{v \in \gamma(F)} |f(v)| \|_{\ell^n(E(F))} \ll |F|^{(n-1)/n} \|f\|_{\ell^n(V(F))},
$$
where for each $e \in E(F)$ with base point $w \in W(F)$, the supremum is over all irreducible projective algebraic curves $\gamma$ in $V$ of degree at most $d$ in $\P^N$ that pass through $w$ with projective tangent vector $e$ but are not contained in $W$, and the implied constants depend only on $N,d,n,k$.
\end{theorem}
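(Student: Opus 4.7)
The plan is to transport the problem via the blowup $\pi : \tilde V \to V$ of $V = \P^N$ along $W$ and then apply Theorem \ref{alg-thm} directly to the pair $(\tilde V, E)$. Under the identification of $\tilde W := \pi^{-1}(W)$ with the projectivized normal bundle $\P N_V W$, the rank $k$ subbundle $E$ becomes a closed subvariety of $\tilde W$ whose fiber over each $w \in W$ is a $\P^{k-1}$; hence
\begin{equation*}
\dim E = (n-k) + (k-1) = n-1,
\end{equation*}
which matches the dimension required by Theorem \ref{alg-thm}. Fix the projective embedding $\tilde V \hookrightarrow \P^M$ indicated in the paragraphs preceding the theorem, so that the strict transform $\tilde\gamma \subset \tilde V \subset \P^M$ of any irreducible curve $\gamma \subset V$ of degree at most $d$ has degree bounded by some $d' = O_{N,d,n,k}(1)$, and so that $E$ has degree $O_{N,d,n,k}(1)$ in $\P^M$.

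Next, define $\tilde f : \P^M(F) \to \R$ by $\tilde f(\tilde v) := f(\pi(\tilde v))$ when $\tilde v \in \tilde V(F) \setminus \tilde W(F)$ and $\tilde f(\tilde v) := 0$ otherwise. Because $\pi$ restricts to a bijection $\tilde V(F) \setminus \tilde W(F) \to V(F) \setminus W(F)$, we have $\|\tilde f\|_{\ell^n(\P^M(F))} \le \|f\|_{\ell^n(V(F))}$. Applying Theorem \ref{alg-thm} to the pair $(\P^M, E)$ at degree $d'$ yields
\begin{equation*}
\Bigl\| \sup_{\tilde\gamma \ni e} \sum_{\tilde v \in \tilde\gamma(F)} |\tilde f(\tilde v)| \Bigr\|_{\ell^n(E(F))} \ll |F|^{(n-1)/n} \|f\|_{\ell^n(V(F))},
\end{equation*}
where the supremum is over irreducible curves $\tilde\gamma$ in $\P^M$ of degree at most $d'$ passing through $e$ but not contained in $E$.

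It remains to dominate the original quantity by the one just bounded. Given $e \in E(F)$ lying over $w \in W(F)$ and any irreducible $\gamma \subset V$ of degree $\le d$ through $w$ with projective tangent direction $e$ and $\gamma \not\subset W$, the strict transform $\tilde\gamma$ is irreducible of degree $\le d'$, passes through $e$ (by the tangent identification recalled before the theorem), and is not contained in $\tilde W$ (so not in $E$) since $\pi$ is an isomorphism off $\tilde W$ and $\gamma \setminus W$ is dense in $\gamma$. Consequently
\begin{equation*}
\sum_{v \in \gamma(F) \setminus W(F)} |f(v)| = \sum_{\tilde v \in \tilde\gamma(F) \setminus \tilde W(F)} |\tilde f(\tilde v)| \le \sum_{\tilde v \in \tilde\gamma(F)} |\tilde f(\tilde v)|,
\end{equation*}
while $|\gamma(F) \cap W(F)| = O(1)$ by Bezout (Lemma \ref{bezout}), so the missing piece $\sum_{v \in \gamma(F) \cap W(F)} |f(v)|$ is at most $O(1) \|f\|_{\ell^n(V(F))}$. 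Combined with $|E(F)| \ll |F|^{n-1}$ from Lemma \ref{size-lem}, the triangle inequality in $\ell^n(E(F))$ closes the bound.

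The main obstacle --- already settled by the discussion preceding the theorem --- is the algebro-geometric input: producing an embedding $\tilde V \hookrightarrow \P^M$ under which strict transforms of bounded-degree curves remain of bounded degree, and verifying that the lifted point $\tilde w \in \tilde\gamma \cap \tilde W$ really agrees with the given projective tangent direction $e$. Once these are granted, the analytic content of the theorem is a direct invocation of Theorem \ref{alg-thm}.
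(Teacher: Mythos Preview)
Your proposal is correct and follows exactly the approach the paper takes: the paper's entire argument is the single sentence ``Applying Theorem \ref{alg-thm} to the pair $(\P^M, E)$, we obtain the following high codimension variant,'' and you have faithfully unpacked the details (the dimension count $\dim E = n-1$, the pullback $\tilde f$, the strict-transform comparison, and the Bezout cleanup for $\gamma(F)\cap W(F)$) that the paper leaves implicit in the setup paragraphs preceding the theorem.
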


There is of course an obvious affine counterpart to this theorem, in which all projective varieties are replaced by affine ones; we omit the details.

\begin{example}   If $\gamma$ is a nondegenerate affine conic
\beq
ax^2 + bxy + cy^2 + dx + ey + g = 0
\eeq
and $\ell$ is a line defined by the vanishing of some linear form $mx + ny  - p$ (which we also denote by $\ell$) we say $\gamma$ has $\ell$ as an asymptote if $\gamma$ can be written
\beq
\ell \ell' + D = 0 
\eeq
for some other linear form $\ell'$ and some constant $D$.  If $\omega$ is a point on the line at infinity -- that is, a direction in $\A^2$ -- then $\gamma$ has an asymptote in direction $\omega$ precisely when $\gamma$ intersects the line at infinity at $\omega$.

Let $f$ be a function $\P^2(F) \ra \R$.   For each conic $\gamma$ we define
\beq
F(\gamma) = \sum_{v \in \gamma(F)} |f(v)|.
\eeq
Then we can define a function $f^*$ on the space of directions in $\A^2$ by setting $f^*(\omega)$ to be the supremum of $F(\gamma)$ as $\gamma$ ranges over the space of conics with an asymptote in direction $\omega$.  Thus, the norm of $f^*$ is controlled by Theorem \ref{alg-thm} applied with $V = \P^2$ and $W$ the line at infinity.

Now let $W$ be the point $[0:1:0]$ on the line at infinity, corresponding to the horizontal direction.  Applying Theorem~\ref{alg-thm} directly yields the trivial inequality
\beq
|f^*(\omega)| < C \|f\|_{\ell^1}.
\eeq

We can define a more interesting maximal operator using the blowup formalism of this section, as follows.  Let $f^*$ be a function on the space of horizontal lines $\ell$, defined by setting $f^*(\ell)$ to be the supremum of $F(\gamma)$ as $\gamma$ ranges over the space of conics having $\ell$ as asymptote.  The horizontal lines in $\A^2$ are in bijection with the directions through the point $[0:1:0]$ in $\P^2$, apart from the direction of the line at infinity, and the condition that $\gamma$ has $\ell$ as asymptote implies that the projective closure of $\gamma$ passes through $[0:1:0]$ with tangent direction corresponding to $\ell$.  Now applying Theorem~\ref{alg-thm-high} with $V = \P^2$ and $W=[0:1:0]$ yields an upper bound on the norm of $f^*$.
\end{example}

\subsection{Higher dimensional averages}

\medskip

In the preceding discussion, all averages were over one-dimensional curves $\gamma$.  It is natural to ask what happens if we instead average over $k$-dimensional subvarieties $\pi$ for some $k>1$.  To simplify the discussion we focus on one concrete case, that of the \emph{$k$-plane maximal operator} $T_{n,k}$ for fixed $1 \leq k \leq n$, which maps functions $f: F^n \to \R$ to functions $T_{n,k}: \operatorname{Gr}(F^n,k) \to \R^+$ defined on the Grassmannian of $k$-dimensional subspaces $\pi$ of $F^n$, and is defined by the formula
$$ T_{n,k} f(\pi) := \sup_{a \in F^n} \sum_{v \in \pi+a} |f(v)|.$$

\begin{conjecture}[$k$-plane maximal operator estimate in finite fields]\label{kpconj}  For any $n,k \geq 1$ and $f: F^n \to \R$, we have
$$ \| T_{n,k} f \|_{\ell^q(\Gr(F^n,k))} \ll |\Gr(F^n,k)|^{1/q} \| f \|_{\ell^p(F^n)}$$
whenever $1 \leq p \leq \frac{n}{k}$ and $q \leq (n-k) \frac{p}{p-1}$; the implied constant can depend on $n$ and $k$.
\end{conjecture}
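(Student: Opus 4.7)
The plan is to follow the two-step strategy of Section \ref{flat-sec}. By real interpolation against the trivial bound $\|T_{n,k} f\|_{\ell^\infty} \ll \|f\|_{\ell^1}$, the whole range of $(p,q)$ collapses to the single critical endpoint where the line $q=(n-k)p/(p-1)$ meets the constraint $p=n/k$, namely $(p,q)=(n/k,n)$. Thus the task reduces to proving
\begin{equation*}
\| T_{n,k} f \|_{\ell^n(\Gr(F^n,k))} \ll |\Gr(F^n,k)|^{1/n} \| f \|_{\ell^{n/k}(F^n)}.
\end{equation*}

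Dualizing as in the reduction from Theorem \ref{alg-thm2} to Proposition \ref{dist}, this endpoint estimate is equivalent to the distributional claim that if $f: F^n \to \{0\} \cup [1, \infty)$ and there exist distinct directions $\pi_1, \ldots, \pi_J \in \Gr(F^n, k)$ with affine translates $\pi_j + a_j$ satisfying $\sum_{v \in \pi_j + a_j} f(v) \geq \lambda$ for every $j$, then $J \ll |F|^{k(n-k)} \lambda^{-n} \|f\|_{\ell^{n/k}}^n$. Since the family of affine $k$-planes is invariant under translation by $F^n$, the Nikishin--Maurey--Pisier--Stein factorization argument of Proposition \ref{red} transfers verbatim: by replacing $f$ by a random superposition $f_M(v) := (\sum_{m=1}^M f(v - u_m)^{n/k})^{k/n}$ and enlarging the direction set by superimposing $F^n$-translates, one reduces to the extremal regime $\lambda \asymp \|f\|_{\ell^{n/k}}$.

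At this point one would attempt the polynomial method: choose a nonzero $P$ of degree $D$ with $D^n$ just exceeding $\sum_v f(v)^n$, impose vanishing of a suitable multiplicity at each support point of $f$, and try to force $P$ to vanish identically on each affine $k$-plane $\pi_j + a_j$. If this can be carried out, the $J$ directions appear as directions of $k$-planes contained in the hypersurface $\{P=0\}$, and a count of such directions --- obtained by intersecting with the hyperplane at infinity and invoking a Kakeya-type statement for the resulting $(k-1)$-plane configuration --- would yield the desired bound on $J$.

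The principal obstacle is precisely the forced-vanishing step. In the $k=1$ setting of Section \ref{flat-sec}, Bezout's theorem converts the single numerical inequality $\sum_{v \in \gamma_j(F)} f(v) \gg D$ into identical vanishing of $P$ along $\gamma_j$. For $k>1$, the restriction of a degree $D$ polynomial to a $k$-plane occupies a space of dimension $\binom{D+k}{k} \asymp D^k$, so a naive point-wise count requires a total vanishing weight of order $D^k$ on each plane, whereas the distributional hypothesis supplies only the one-dimensional sum $\lambda$. Balancing these counts against the vanishing budget $D^n \gg \sum_v f(v)^n$ produces an estimate strictly weaker than the conjecture, and the discrepancy grows with $k$. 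Closing the gap seems to require either a polynomial method with genuinely higher-order multiplicities along the $k$-plane, or a new algebraic-geometric input such as a multilinear Kakeya--type inequality over $F$; the natural inductive escape route of passing to $(k-1)$-plane Kakeya at infinity merely transports the obstruction rather than resolving it. This is essentially the reason Conjecture \ref{kpconj} remains open when $k > 1$.
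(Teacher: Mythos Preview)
This statement is a \emph{conjecture}, not a theorem: the paper explicitly says ``Although not able to verify the conjecture above, we prove a related estimate\ldots'' and then proves Theorem~\ref{mixedq} instead. So there is no ``paper's own proof'' to compare against, and your proposal is, appropriately, not a proof either but an explanation of why the natural Dvir-type approach stalls for $k>1$. On that score you are in agreement with the paper: Conjecture~\ref{kpconj} is left open.

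Where your discussion and the paper's treatment diverge is in the choice of partial result. You sketch the direct polynomial-method attack (reduce to the endpoint $(p,q)=(n/k,n)$, dualize to a distributional estimate, run Nikishin--Maurey--Pisier--Stein, then try to force a polynomial to vanish on each $k$-plane) and correctly isolate the failure point: restricting a degree-$D$ polynomial to a $k$-plane leaves $\asymp D^k$ degrees of freedom, while the hypothesis only controls a one-dimensional sum of weight $\lambda$, so the Bezout step from Section~\ref{flat-sec} has no analogue. The paper does not pursue this line at all. Instead it salvages a mixed-norm estimate (Theorem~\ref{mixedq}) by Bourgain's iteration: it slices each $k$-plane into lines, applies the already-proven $k=1$ maximal inequality \eqref{exp} fibrewise, and interpolates the result against a trivial $\ell^1\to\ell^\infty$ bound to step the dimension up by one. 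This yields control in the $\tilde\ell^{q_1,\ldots,q_k}$ norm with $q_i=(n-i)(n-i+1)/(n-k)$; the exponents are in the wrong (decreasing) order to be interchanged into a single $\ell^q$, which is exactly where the paper's argument stops short of the conjecture.

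So: your diagnosis of the obstruction is sound and your conclusion that the conjecture is open matches the paper. But be aware that the paper's partial progress comes from a completely different mechanism (inductive slicing plus interpolation) rather than from pushing the polynomial method further.
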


\begin{remark}
Note that the $k=1$ case of this inequality is just \eqref{shoop}.  The various exponents here are sharp, as can be seen by testing $f$ equal to the indicator function of a point, a $k$-plane, or all of $F^n$.  For the best known prior progress towards this result, see \cite{bueti}.
\end{remark}

Although not able to verify the conjecture above, we prove a related estimate where the $\ell^q$ norm is replaced by a certain normalised mixed norm, defined as follows.  For any $1 \leq k \leq n$, any $n$-dimensional vector space $V$ and any exponents $q_1,\ldots,q_k$, we define the norm $\|\cdot\|_{\tilde{\ell}^{q_1, \ldots, q_k}(\Gr(V,k))}$ for $k=1$ by the formula
\beq
\|h\|_{\tilde{\ell}^{q_1}(\Gr(V,1))} = \left(\frac{1}{|\Gr(V,1)|}\sum_{\pi \in \Gr(V,1)} |h(\pi)|^{q_1} \right)^{1/q_1}
\eeq
and then recursively for $k>1$ by the formula
\beq
\|g\|_{\tilde{\ell}^{q_1, \ldots, q_k}(\Gr(V,k))} = \left(\frac{1}{|\Gr(V,1)|}\sum_{\pi \in \Gr(V,1)} \|g(\pi + \cdot)\|_{\tilde{\ell}^{q_2,\ldots,q_{k}}(\Gr(\pi^c,k-1))}^{q_1} \right)^{1/q_1},
\eeq 
where for every subspace of $\pi$ of $V$, we arbitrarily pick a complementary subspace $\pi^c$ (thus $\pi^c+\pi = V$ and $\pi^c \cap \pi = \{0\}$).
If any $q_i = \infty$ above, the corresponding sum is replaced by a $\sup$ in the usual way.

Using an iteration method of Bourgain's \cite{borg:kakeya} and the bound \eqref{exp}, we now obtain

\begin{theorem} \label{mixedq}  For any $n,k \geq 1$ and $f: F^n \to \R$, we have
\beq
 \| T_{n,k} f \|_{\tilde{\ell}^{q_1, \ldots, q_k}(\Gr(F^n,k))} \ll \| f \|_{\ell^\frac{n}{k}(F^n)}
\eeq
with $q_i = \frac{(n-i)(n-i+1)}{n-k};$ the implied constant can depend on $n$ and $k$.
\end{theorem}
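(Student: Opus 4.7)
The plan is to prove Theorem~\ref{mixedq} by induction on $k$. For the base case $k=1$, the mixed norm $\tilde\ell^{q_1}$ with $q_1 = n$ is (after absorbing its normalization factor) just $|F|^{-(n-1)/n}$ times $\|\cdot\|_{\ell^n(\Gr(F^n,1))}$, so the statement reduces exactly to the endpoint Kakeya maximal inequality \eqref{exp}. For the inductive step from $(n-1,k-1)$ to $(n,k)$, I would use the slicing identity
\beq
T_{n,k} f(\pi_1 \oplus \pi_{k-1}) = T_{n-1,k-1}(g_{\pi_1})(\pi_{k-1}),
\eeq
valid for $\pi_1 \in \Gr(F^n, 1)$ and $\pi_{k-1} \in \Gr(\pi_1^c, k-1)$, where $g_{\pi_1}(w) := \sum_{v \in \pi_1 + w} |f(v)|$ is the directional line-sum function on $\pi_1^c$; the identity is obtained by decomposing an affine $k$-plane of direction $\pi_1 \oplus \pi_{k-1}$ into parallel lines in direction $\pi_1$. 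Substituting into the recursive definition of $\tilde\ell^{q_1,\ldots,q_k}$, and noting that the inner exponents $q_2,\ldots,q_k$ match the exponents produced by the inductive hypothesis for $(n-1,k-1)$ (both equal $(n-i)(n-i+1)/(n-k)$), the IH yields
\beq
\|T_{n,k} f\|_{\tilde\ell^{q_1,\ldots,q_k}}^{q_1} \ll \frac{1}{|\Gr(F^n,1)|} \sum_{\pi_1 \in \Gr(F^n,1)} \|g_{\pi_1}\|_{\ell^{(n-1)/(k-1)}(\pi_1^c)}^{q_1}.
\eeq

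The proof thus reduces to the auxiliary Kakeya-type estimate
\beq
\sum_{\pi_1 \in \Gr(F^n,1)} \|g_{\pi_1}\|_{\ell^r(\pi_1^c)}^{q_1} \ll |F|^{n-1} \|f\|_{\ell^{n/k}(F^n)}^{q_1}, \qquad r = \frac{n-1}{k-1},
\eeq
which strengthens \eqref{exp}/\eqref{shoop} at the endpoint $(p,q) = (n/k, q_1)$ by replacing the sup over parallel lines in direction $\pi_1$ with an $\ell^r$-sum. I would establish this by combining \eqref{shoop} with the conservation identity $\|g_{\pi_1}\|_{\ell^1(\pi_1^c)} = \|f\|_{\ell^1(F^n)}$ via the log-convex pointwise interpolation
\beq
\|g_{\pi_1}\|_r \le \|g_{\pi_1}\|_\infty^{(n-k)/(n-1)} \|g_{\pi_1}\|_1^{(k-1)/(n-1)}.
\eeq
The key exponent identity $q_1(n-k)/(n-1) = n$ makes the Kakeya bound on $\sum_{\pi_1} \|g_{\pi_1}\|_\infty^n$ directly applicable (obtained from \eqref{shoop} at $(p,q)=(n,n)$, or by Hölder from the $(n/k,q_1)$ endpoint), while the remaining $\|f\|_1$-exponent $q_1(k-1)/(n-1) = n(k-1)/(n-k)$ combines with the Kakeya term via $n + n(k-1)/(n-k) = q_1$ so that the resulting bound matches $|F|^{n-1} \|f\|_{n/k}^{q_1}$ exactly on indicator extremizers $f = 1_A$ with $A$ a $k$-plane.

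The main obstacle is that the pointwise log-convex interpolation above is sharp only for indicator-like functions, where $g_{\pi_1}$ is essentially constant on its support; for general $f$ the argument produces a bound in terms of $\|f\|_1^{n(k-1)/(n-k)} \|f\|_n^n$, which majorizes the target $\|f\|_{n/k}^{q_1}$ (by log-convexity of $L^p$ norms, with equality only on indicator extremizers). Passing to general $f$ therefore requires a dyadic/layer-cake decomposition into essentially indicator pieces followed by a Marcinkiewicz-style real-interpolation reassembly, in the spirit of the Nikishin--Maurey--Pisier--Stein factorization used in Proposition~\ref{red}, carefully handled so as to avoid a loss into the Lorentz norm $\ell^{n/k,1}$.
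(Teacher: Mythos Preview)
Your induction scheme is a legitimate variant of the paper's: you peel off a line from the \emph{outside} of the mixed norm, reducing $(n,k)$ to $(n-1,k-1)$, whereas the paper peels a line from the \emph{inside}, reducing $(n,k)$ to $(n,k-1)$ (applying \eqref{exp} on the innermost factor $\Gr(\pi^c,1)$ with $\pi\in\Gr(F^n,k-1)$, and then interpolating the induction hypothesis for $(n,k-1)$ with the trivial $\ell^1\to\ell^\infty$ bound). The slicing identity and the exponent-matching you describe are correct.

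The gap is in the auxiliary estimate. Your pointwise log-convexity bound
$\|g_{\pi_1}\|_r \le \|g_{\pi_1}\|_\infty^{(n-k)/(n-1)}\|g_{\pi_1}\|_1^{(k-1)/(n-1)}$
combined with \eqref{exp} produces $\|f\|_1^{n(k-1)/(n-k)}\|f\|_n^n$, which \emph{dominates} $\|f\|_{n/k}^{q_1}$ rather than being dominated by it; so you only get the restricted strong-type (indicator) bound. The layer-cake/Marcinkiewicz patch you sketch does not avoid the Lorentz loss: summing $\|f_j\|_{n/k}$ over dyadic level sets is exactly $\|f\|_{\ell^{n/k,1}}$, and at the endpoint $p=n/k$ there is no room to interpolate this away. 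There is no translation-invariance structure here analogous to that exploited in Proposition~\ref{red}, so invoking factorization does not help.

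The clean fix --- and this is what the paper does at the corresponding step --- is complex interpolation at the operator level. Regard $S:f\mapsto (g_{\pi_1})_{\pi_1}$ as a linear map into the mixed-norm space $\tilde\ell^{q}(\Gr(F^n,1);\ell^s(\pi_1^c))$. From \eqref{exp} at $p=n$ one has $S:\ell^n\to\tilde\ell^n(\ell^\infty)$, and from $\|g_{\pi_1}\|_1=\|f\|_1$ one has $S:\ell^1\to\tilde\ell^\infty(\ell^1)$. Complex interpolation at $\theta=(n-k)/(n-1)$ lands precisely at $S:\ell^{n/k}\to\tilde\ell^{q_1}(\ell^r)$ with $q_1=n(n-1)/(n-k)$ and $r=(n-1)/(k-1)$, which is exactly the auxiliary bound you need. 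With this replacement your argument goes through and is essentially a mirror image of the paper's proof.
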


Since $\frac{1}{k}(\frac{1}{q_1} + \ldots + \frac{1}{q_k}) = \frac{1}{n}$, one would like to switch the order of the mixed norm and interpolate to deduce Conjecture \ref{kpconj} from the theorem above; this is unfortunately not possible because the exponents $q_i$ are in decreasing order. 

\begin{proof}[Proof of Theorem \ref{mixedq}]
For $k=1$ the bound is identical to \eqref{exp}. Suppose that the bound holds for $k-1$ and all $n$, and that $\pi \in \Gr(F^n,k-1),$ and $\xi \in \Gr(\pi^c,1).$ Note that 
\beq
T_{n,k}f(\xi + \pi) \leq T_{n-k+1,1}\left(\sum_{x \in \pi} |f(x + \cdot)|\right)(\xi).
\eeq
Applying \eqref{exp} to the function $g_{\pi}(y) = \sum_{x \in \pi} |f(x + y)|$ defined on $\pi_c$, we obtain 
\begin{equation} \label{kplanekak}
\|T_{n,k}f\|_{\tilde{\ell}^{r_1, \ldots, r_{k-1},n-k+1}(\Gr(F^n,k))} \ll \|\|g_{\pi}\|_{\ell^{n-k+1}(\pi^c)}\|_{\tilde{\ell}^{r_1, \ldots, r_{k-1}}(\Gr(F^n,{k-1}))}
\end{equation}
for any exponents $r_1, \ldots, r_{k-1}$ where on the right side above, $\pi$ is variable summation in the outer norm. 
Interpolating\footnote{One can for instance use the complex interpolation method here, which works perfectly well in the mixed-norm setting, see e.g. \cite{bergh:interp}.} the hypothesized bound 
\beq
\|\|g_{\pi}\|_{\ell^{\infty}(\pi^c)}\|_{\tilde{\ell}^{\frac{n(n-1)}{n-k+1}, \ldots, n-k+2}(\Gr(F^n,{k-1}))} \ll \|f\|_{\ell^{\frac{n}{k-1}}(F^n)}
\eeq
with the trivial bound
\beq
\|\|g_{\pi}\|_{\ell^{1}(\pi^c)}\|_{\tilde{\ell}^{\infty, \ldots, \infty}(\Gr(F^n,{k-1}))} \ll \|f\|_{\ell^{1}(F^n)}
\eeq
we see that the right side of \eqref{kplanekak} is $\ll \|f\|_{\ell^{\frac{n}{k}}(F^n)}$ provided that each $r_i = \frac{(n-i)(n-i+1)}{n-k}.$ 
\end{proof}

The situation is more satisfactory if we restrict our attention to the analogue of the Kakeya set conjecture.  We say a subset $E \subset F^n$ is a {\em $k$-plane Kakeya set} if it contains a $k$-plane parallel to any given $k$-plane.  (Such sets are also called {\em $(n,k)$-sets.})  Of course, a $k$-plane Kakeya set is also a Kakeya set in the usual sense, so one has $|E| > c_n |F|^n$ by Theorem~\ref{dvir-thm}.  But the $k$-plane Kakeya condition in fact implies a much better lower bound on $|E|$ when $k > 1$.

\begin{proposition}  Let $n,k$ be integers with $2 \leq k < n$.  Let $E$ be a subset of $F^n$ which contains a $k$-plane in every direction, and let $q = |F|$.  Then
\beq
|E| \geq q^n(1-q^{1-k})^{\binom{n}{2}}.
\eeq
for all $q$ sufficiently large relative to $n$.
\label{pr:kplanekakeya}
\end{proposition}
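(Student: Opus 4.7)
The plan is to apply Dvir's polynomial method in the multiplicity-enhanced form of \cite{saraf, dvir2}. Set $q := |F|$, $m := (q^k - 1)/(q-1) = 1 + q + \cdots + q^{k-1}$, and $d := mq - 1$. Assume for contradiction that $|E| < q^n (1-q^{1-k})^{\binom{n}{2}}$. A standard dimension count shows that the space of polynomials in $F[x_1,\ldots,x_n]$ of degree at most $d$ vanishing to order $\geq m$ at every point of $E$ has dimension at least $\binom{d+n}{n} - |E|\binom{m+n-1}{n}$, which is positive as soon as $|E| < \prod_{i=0}^{n-1}(mq+i)/(m+i)$. Taylor-expanding this product and $q^n(1-q^{1-k})^{\binom{n}{2}}$ in powers of $1/q$, one verifies that their ratio equals $1 + 2\binom{n}{2}q^{-k} + \frac{n(n-1)(n+1)}{6}q^{2-2k}$ plus lower-order corrections, a quantity strictly greater than $1$ for $q$ sufficiently large relative to $n$. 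Hence there exists a nonzero polynomial $P$ in the above space.

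For each $W \in \Gr(F^n, k)$, the $k$-plane Kakeya hypothesis supplies $a_W$ with $W + a_W \subset E$, so $P$ vanishes to order $\geq m$ at each of the $q^k$ points of $W + a_W$. Since $\deg P \leq d < mq$, the multiplicity-enhanced Schwartz-Zippel lemma of \cite{dvir2} forces the restriction $P|_{W+a_W}$ to vanish identically as a polynomial on $W + a_W \cong F^k$. Expanding $P(a_W + t)$ as a polynomial in $t \in W$, the top-degree-in-$t$ component is precisely $P_{d^*}|_W$, where $P_{d^*}$ is the nonzero top homogeneous component of $P$ (of degree $d^* = \deg P$). Hence $P_{d^*}|_W \equiv 0$ as polynomial on every $k$-subspace $W \subset F^n$.

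Now restrict $P_{d^*}$ to any $(k+1)$-subspace $U \subset F^n$. The $k$-subspaces of $U$ are exactly the hyperplanes through the origin of $U$, so $P_{d^*}|_U$ vanishes as polynomial on each such hyperplane. It is therefore divisible in $F[U]$ by every linear form on $U$, hence by their product, a polynomial of degree $(q^{k+1}-1)/(q-1)$. Since $\deg(P_{d^*}|_U) \leq d = (q^{k+1}-q)/(q-1)-1 < (q^{k+1}-1)/(q-1)$, we must have $P_{d^*}|_U \equiv 0$. Iterating the same argument with $k$ replaced in turn by $k+1, k+2, \ldots$ (each iteration's degree bound $d < (q^{j+1}-1)/(q-1)$ is automatic for $j \geq k$), we conclude $P_{d^*} \equiv 0$ on $F^n$ itself, contradicting $P_{d^*} \neq 0$.

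The principal obstacle is the asymptotic comparison in the first paragraph: the coefficient-by-coefficient Taylor expansion required to identify the positive leading correction $2\binom{n}{2}q^{-k} + \frac{n(n-1)(n+1)}{6}q^{2-2k}$, which is what makes the $\binom{n}{2}$ exponent in the bound match the polynomial-method output. The subsequent algebraic steps are a standard polynomial-method template adapted to $k$-planes via restriction to $(k+1)$-subspaces.
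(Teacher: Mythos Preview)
Your argument is correct and follows essentially the same route as the paper's: a multiplicity-enhanced Dvir polynomial $P$ of degree $<mq$ vanishing to order $m$ on $E$, the multiplicity Schwartz--Zippel lemma forcing $P$ to vanish identically on each $k$-plane in $E$, and then the observation that the leading homogeneous part of $P$ must vanish on every $F$-rational $k$-subspace and hence be divisible by too many linear forms. The only cosmetic differences are that the paper takes $m=q^{k-1}$ rather than your $m=(q^k-1)/(q-1)$ (both satisfy $mq-1<(q^{k+1}-1)/(q-1)$, so either works), and the paper packages your iteration step as a separate lemma bounding from below the degree of a homogeneous form vanishing on all $(k-1)$-planes in $\P^{n-1}$.
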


\begin{proof}
We use the method of Dvir together with the ``method of multiplicities'', as in the proof of Proposition~\ref{red}.
Let $m$ be a positive integer and let $\PP_{mq-1}$ be the space of polynomials in $x_1, \ldots, x_n$ of degree at most $mq-1$.  Then
\beq
\dim_F \PP_{mq-1} = {mq +  n - 1 \choose n}
\eeq
We want to find a nonzero polynomial $P$ in $\PP_{mq-1}$ such that $P$ vanishes to order $m$ at each point of $E$.  The vanishing conditions at each point impose ${m + n - 1 \choose n}$ linear conditions on $\PP_{mq-1}$, so we can choose such a $P$ as long as
\begin{equation}
{mq +  n - 1 \choose n} > {m + n - 1 \choose n} |E|.
\label{eq:mq}
\end{equation}
  
Now let $X$ be a $k$-plane contained in $E$.  Then $P$ vanishes to degree $m$ on each point of $X$, and thus must vanish identically on $X$ thanks to the following lemma, a variant of the Schwarz-Zippel lemma.

\begin{lemma}  Let $Q$ be a polynomial on $F^k$ of degree less than $mq$ which vanishes to order $m$ at each point of $F^k$.  Then $Q=0$.
\end{lemma}

\begin{proof}  The statement is clear when $k=1$.  Then proceed by induction; if $k$ is the smallest integer where the lemma doesn't hold, then $Q$ must restrict to $0$ on each of the $(q^{k+1}-q)/(q-1)$ $F$-rational hyperplanes in $F^k$.  Write $\Pi$ for the product of all the $F$-rational linear forms on $F^k$.  Then we can write
\beq
Q = Q_0 \Pi^M
\eeq
for some $M > 0$, and some $Q_0$ not a multiple of $\Pi$.   Now $\Pi$ vanishes at each point of $F^k$ to order $(q^k-1)/(q-1)$, and the degree of $\Pi$ is $(q^{k+1}-q)/(q-1)$.  Thus $Q_0$ vanishes at each point to order $m_0 = m - M(q^k - 1)/(q-1)$ and has degree less than $mq - M(q^{k+1}-q)/(q-1) = m_0q$.  But $Q_0$ is not a multiple of $\Pi$, so $Q_0$ restricts to a nonzero polynomial on some $F$-rational hyperplane, which contradicts the induction hypothesis.
\end{proof}

Let $\bar{P}$ be the leading term of $P$, a homogeneous polynomial of degree at most $mq-1$.  
Since $P$ vanishes identically on a $k$-plane in every direction, $\bar{P}$ must vanish identically on every (projective) $(k-1)$-plane in $\P^{n-1}(F)$.  The following lemma now bounds $\deg P$ from below.

\begin{lemma}  Let $Q$ be a nonzero homogeneous polynomial in $n$ variables which restricts to $0$ on every $F$-rational $(k-1)$-plane on $\P^{n-1}$.  Then $\deg Q \geq (q^{k+1}-1)/(q-1)$.
\end{lemma}

\begin{proof}
Choose a homogeneous form $Q$, and let $k$ be the smallest integer such that $Q$ restricts to $0$ on every $F$-rational $(k-1)$-plane.  Let $X$ be an $F$-rational $k$-plane on which $Q$ doesn't vanish.  Then the restriction of $Q$ to $X$ is a homogeneous form in $k+1$ variables which is a multiple of $(q^{k+1}-1)/(q-1)$ distinct linear forms; thus $Q$ must have degree at least $(q^{k+1}-1)/(q-1)$.  The statement in the lemma follows, since $(q^{k+1}-1)/(q-1)$ increases with $k$.
\end{proof}

To sum up, we have proven that the inequality $\ref{eq:mq}$ can hold only when
\beq
mq - 1 \geq (q^{k+1}-1)/(q-1)
\eeq
We note that the negation of this condition forces $m=1$ when $k=1$; but for larger $k$ it allows substantially more freedom in $m$.  In particular, we can take $m=q^{k-1}$ and conclude that
\beq
|E| \geq {q^k + n - 1 \choose n} {q^{k-1} + n - 1 \choose n}^{-1}
\eeq
and one easily verifies that the latter quantity is bounded below by $q^n(1-q^{1-k})^{n \choose 2}$ once $q$ is large relative to $n$.
\end{proof}

\begin{remark}  Suppose that $1 < k < n$.  Then, by \cite[Theorem
2]{saraf}, $F^{n-k+1}$ contains a Kakeya set of size $2^{-n+k}
q^{n-k+1} + O( q^{n-k} )$.  Embedding this Kakeya set in $F^n$, and
then also adding the set $F^n \backslash F^{n-k+1}$, we obtain a
$k$-plane Kakeya set of size $q^n (1 - (1-2^{-n+k}) q^{1-k} + O(
q^{-k} ) )$.  Comparing this with the upper bound of $q^n (1 -
\binom{n}{2} q^{1-k} + O( q^{-k} ) )$ from Proposition \ref{pr:kplanekakeya}, we
see that the latter is nearly sharp, missing at most  by a constant in the $q^{1-k}$ term.
\end{remark}

\subsection{Kakeya problems over more general rings}

The analogy between Kakeya problems over $\R$ and their finite field analogues is deficient in the very important respect that the latter do not admit multiple scales.  That is, there is no natural notion of ``distance'' in $F^n$ more refined than the trivial distance, in which any two distinct points are separated by distance $1$. 

The problem of multiple scales interferes with any attempt to mimic the proofs of finite field results in the Euclidean setting, and indeed, some facts which hold over finite fields are not true over $\R$.  For instance, the construction of Besicovitch sets depends crucially on the existence of multiple scales, and as a result they have no analogue over finite fields;  Dvir's theorem shows that a subset of $F^n$ containing a line in every direction has measure bounded below by an absolute constant (where measure is normalized so that $F^n$ has measure $1$.)

It is thus an interesting problem to reintroduce multiple scales in an algebraic setting.  One might, for instance, ask about Kakeya problems over the finite Artinian rings $F[x]/x^k$ or $\Z/p^k \Z$. These rings have precisely $k$ scales, rather than the infinite scales available in $\R$; so they should stand as a good ''baby case'' for understanding the role played by the interplay between different scales in Kakeya problems.  Better still would be to use results on the finite rings above to derive results in the limit rings $F[[x]]$ and $\Z_p$, which are closer still to $\R$.

Let $R$ be the ring $F[x]/x^k$, and let $\ic{m} = (x)$ be the maximal ideal of $R$.  By a line in $R^n$ we mean a subset of $R^n$ of the form $a + bR$, with $a \in R^n$ and $b \in \R^n - \ic{m}^n$.  The line determines $b$ up to scalar multiplication by the group of units $R^* = R - \ic{m}$; so by the {\em direction} of a line we mean an element of $(R^n - \ic{m}^n) / R^* = \P^{n-1}(R)$.

Dvir's theorem almost immediately implies a lower bound on the size of Kakeya sets in $R^n$.

\begin{proposition} Let $R$ be the ring $F[x]/x^k$, and let $E \subset R^n$ be a subset containing a line in every direction.  Then
\beq
|E| \geq c^{nk} |R|^n
\eeq
\label{pr:fxxk}
for some $c$ in $(0,1)$.
\end{proposition}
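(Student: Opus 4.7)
The plan is to identify $R^n$ with $F^{nk}$ as an $F$-vector space and reduce directly to Dvir's Kakeya theorem (Theorem~\ref{dvir-thm}) applied in dimension $nk$. Since $R = F[x]/x^k$ has $F$-dimension $k$, the identification $R^n \cong F^{nk}$ is canonical. Under it, each $R$-line $a + bR \subset R^n$ with $b \in R^n - \ic{m}^n$ becomes an affine $F$-subspace of $F^{nk}$ of $F$-dimension exactly $k$: the $R$-linear map $t \mapsto bt$ from $R$ onto $bR$ is injective, because some coordinate of $b$ is a unit in $R$, and hence an $F$-linear isomorphism.

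The heart of the argument is the claim that $E$ is automatically an ordinary Kakeya set in $F^{nk}$ over $F$: for every $F$-direction $\tau \in \P^{nk-1}(F)$, the set $E$ contains an affine $F$-line in direction $\tau$. Let $v \in R^n - \{0\}$ represent $\tau$, and let $j \geq 0$ be the largest integer with $v \in x^j R^n$ (such a $j$ exists and satisfies $0 \leq j < k$, since $x^k = 0$). Write $v = x^j u$ with $u \in R^n$; the maximality of $j$ forces $u \notin x R^n = \ic{m}^n$, so $[u]$ is a legitimate element of $\P^{n-1}(R)$. By hypothesis, $E$ contains some $R$-line $a + uR$. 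Since $v = u \cdot x^j \in uR$, the affine $F$-line $a + Fv$ is contained in $a + uR \subset E$ and has $F$-direction $\tau$, as required.

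Applying Dvir's theorem (Theorem~\ref{dvir-thm}) in dimension $nk$ now yields $|E| \geq c_{nk}|F|^{nk} = c_{nk}|R|^n$. By the sharp form $c_N = (\tfrac{1}{2} + o(1))^N$ of Dvir's bound from \cite{saraf}, \cite{dvir2} (cf.\ the remark following Theorem~\ref{dvir-thm}), the quantity $\inf_{N \geq 1} c_N^{1/N}$ is a strictly positive number $c$ in $(0,1)$, so $c_{nk} \geq c^{nk}$ for all $n, k$; this gives the desired estimate. There is essentially no obstacle; the only nontrivial input is the elementary covering step $v = x^j u$, which is immediate from the fact that $\ic{m} = (x)$ is principal and nilpotent. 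Note that Dvir's original constant $c_N = 1/N!$ would not suffice here (it yields a factor far worse than $c^{nk}$), so the improved constant is genuinely needed.
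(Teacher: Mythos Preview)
Your proof is correct and follows the same route as the paper: identify $R^n$ with $F^{nk}$, show that any nonzero $v \in R^n$ lies in $Ru$ for some $u \notin \ic{m}^n$ (your $v = x^j u$ step is exactly the paper's choice of a ``$v_0$'' with $\phi^{-1}(v) \in Rv_0$), and then invoke the sharpened Dvir bound $c_N = (\tfrac{1}{2}+o(1))^N$ to get a constant of the form $c^{nk}$. Your explicit remark that the original $1/N!$ constant would not suffice is a useful clarification that the paper leaves implicit.
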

\begin{proof}
Choose an isomorphism $\phi: R^n \ra F^{nk}$ of $F$-vector spaces.  Then $\phi$ carries multiplication by $x$ to an endomorphism $X$ of $F^{nk}$ satisfying $X^k = 0$.

Let $\omega$ be a direction in $\P^{nk-1}(F)$, and let $v$ be a point of $F^{nk}$ lying over $\omega$.  Let $v_0$ be an element of $R^n - \ic{m}^n$ such that $\phi^{-1}(v) \in Rv_0$.  (If $v$ doesn't lie in the image of $X$, we can take $v_0$ to be $\phi^{-1}(v)$ itself.)

By hypothesis, there is some $x \in R^n$ such that $x + Rv_0 \subset E$.  But applying $\phi$, this certainly implies that $\phi(x) + F v \subset \phi(E)$.  So $\phi(E)$ is a Kakeya set, and we conclude (\cite{dvir2},\cite{saraf}) that
\beq
|E| = |\phi(E)| < c^{nk} |F|^{nk}
\eeq
for some $c \in (0,1)$ (which in fact can be taken to be slightly larger than $1/2$ as in \cite{dvir2}).
\end{proof}

Proposition~\ref{pr:fxxk} shows that, when $k$ is held fixed and $F$ varies, the analogue of the Kakeya conjecture holds for $R = F[x]/x^k$.  But when $F$ is fixed and $k$ grows, the situation is different.  The natural analogue of Minkowski dimension for a subset $E$ of $R^n$ is 
\beq
\log |E| / \log |R|.
\eeq
So the bound supplied by Proposition~\ref{pr:fxxk} shows only that the Minkowski dimension is bounded below by $1 + \log c / \log |F|$.  This gives the desired lower bound of $1$ only when $|F|$ is allowed to grow.

This leads us to several natural questions:
\begin{itemize}
\item Can the lower bound in Proposition~\ref{pr:fxxk} be improved to something of the form
\beq
|E| \geq c_n |R|^n?
\eeq
More modestly, is there a bound
\beq
|E| \geq c_{n,\eps} |R|^{n-\eps}
\eeq
for any $\eps > 0$?
\item Let $E$ be a subset of $F[[x]]^n$ containing a line in every direction, and write $E_k$ for the image of $E$ under the projection $F[[x]]^n \ra (F[x]/x^k)^n$.  Are there Besicovitch phenomena in $F[[x]]^n$?  That is, is it possible that
\beq
\lim |E_k| |F|^{-nk} = 0?
\eeq
Of course, such a sequence of $E_n$ would give a negative answer to the first question above.
\item What can be said in the cases $R = \Z/p^k \Z$, or $R = \Z_p$?  The simple argument of Proposition~\ref{pr:fxxk} doesn't work, since $R^n$ is no longer an $F$-vector space.  One might try identifying $Z/p^k\Z$ with $\mathbf{F}_p^k$ via the Witt construction (\cite[II.6]{serr:lf}).
\end{itemize}

\appendix

\section{Review of algebraic geometry}\label{alg}

We now quickly review some basic concepts and results in algebraic geometry which are relevant to this paper.  All the material here is found in standard textbooks, e.g. \cite{hart:ag}.

Throughout this appendix, $F$ is a fixed finite field, and $\overline{F}$ is its algebraic closure.  For any $n \geq 1$, we define \emph{affine space}
$$ \A^n := \{ (x_1,\ldots,x_n): x_1,\ldots,x_n \in \overline{F} \}$$
and \emph{projective space}
$$ \P^n := \{ [x_1,\ldots,x_{n+1}]: (x_1,\ldots,x_{n+1}) \in \A^{n+1} \backslash 0 \}$$
where $[x_1,\ldots,x_{n+1}]$ is the equivalence class of $(x_1,\ldots,x_{n+1})$ modulo dilations.  (More precisely, $\A^n$ and $\P^n$ are {\em schemes} whose $\overline{F}$-points are the sets given above; for the purpose of this paper, the distinction between a scheme and its set of $\overline{F}$-points can be safely ignored by non-experts.)

We can embed $\A^n$ in $\P^n$ by identifying $(x_1,\ldots,x_n)$ with $[x_1,\ldots,x_n,1]$.  We also define the set of $F$-points of $\A^n$ and of $\P^n$ by the formulae
$$ \A^n(F) := F^n = \{ (x_1,\ldots,x_n): x_1,\ldots,x_n \in F \}$$
and
$$ \P^n(F) := {\P}F^n = \{ [x_1,\ldots,x_{n+1}]: (x_1,\ldots,x_{n+1}) \in F^{n+1} \backslash 0 \}.$$

\begin{definition}[Varieties]  A \emph{projective algebraic set} (resp. \emph{affine algebraic set}) is a subset $V$ of a projective space $\P^N$ (resp. affine space $\A^N$) of the form
$$ V = \{ x \in \P^N: P_1(x) = \ldots = P_J(x) = 0 \}$$
where $P_1,\ldots,P_J$ are homogeneous polynomials (resp. polynomials).  A projective (resp. affine) algebraic set is called \emph{irreducible} if it cannot be written as the union of two proper projective (resp. affine) algebraic subsets, in which case we refer to the algebraic set as a \emph{variety}.

The \emph{dimension} of an algebraic set $V$ is the smallest integer $d$ such that intersections of $V$ with generic codimension-$d$ subspaces of $\P^N$ or $\A^N$ are finite.  If $V$ is a projective variety of dimension $d$, then generic codimension $d$ subspaces of $\P^N$ intersect $V$ in a constant number of points, known as the \emph{degree} of $V$.  Every affine variety can be embedded in a unique projective variety of the same dimension (the projective \emph{closure} of the affine variety); we define the degree of the affine variety to be that of its projective closure.

\begin{remark}  The definitions given here are classical in style, and thus subject to certain pathologies; for instance, one might expect the vanishing locus of a degree-$d$ polynomial in $2$ variables to be a curve of degree $d$, and this is generically the case: but if the polynomial is, for instance, $(x+y)^d$, then the vanishing locus is (according to our definition) a curve of degree $1$.  Problems of this kind will change degrees at worst by bounded multiplicative constants, so we will systematically ignore them; in any event, all proofs in the paper work when the better-behaved scheme-theoretic definitions of dimension and degree are used.
\end{remark}

Given a projective (resp. affine) algebraic set $V$, we define its set of  \emph{$F$-points} $V(F)$ to be the set $V(F) = V \cap \P^n(F)$ (resp. $V(F) = V \cap \A^n(F)$).

An \emph{irreducible curve} is an algebraic variety of dimension one.  A \emph{hypersurface} in an algebraic variety $V$ of degree $n$ is a subvariety of $V$ whose dimension is $n-1$.
\end{definition}

\begin{lemma}[Size estimate]\label{size-lem}  Let $V \subset \P^N$ be a projective variety of dimension $n$ and degree $d$.  Then $|V(F)| \leq d(|F|+1)^n$.
\end{lemma}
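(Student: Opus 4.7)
My plan is to prove the bound by induction on $n = \dim V$, reducing the projective statement to an affine analog. The base case $n = 0$ is immediate since $V$ then consists of at most $d$ geometric points.

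For the projective inductive step, choose an $F$-rational hyperplane $H_\infty \cong \P^{N-1}$ and partition $\P^N(F) = \A^N(F) \sqcup H_\infty(F)$. If $V \subset H_\infty$, I drop to $\P^{N-1}$ and invoke a secondary induction on $N$. Otherwise, $V \cap H_\infty$ is an algebraic set of dimension at most $n-1$ and total degree at most $d$, contributing at most $d(|F|+1)^{n-1}$ $F$-points by the inductive hypothesis on $n$ (extended componentwise to algebraic sets), while $V \cap \A^N$ is an affine variety of dimension $n$ and degree at most $d$, which I claim contributes at most $d|F|^n$. Summing and using the elementary inequality $|F|^n + (|F|+1)^{n-1} \leq (|F|+1)^n$ then gives the desired bound $d(|F|+1)^n$.

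It remains to establish the affine claim: for $V' \subset \A^N$ an affine variety of dimension $n$ and degree $d$, one has $|V'(F)| \leq d|F|^n$. I would proceed by induction on $n$ with a secondary induction on the ambient dimension $N$; the base case $n=0$ is trivial. If $V'$ is contained in some coordinate hyperplane $\{x_i = a\}$ with $a \in F$, I change coordinates to realize $V'$ inside $\A^{N-1}$ of the same dimension and degree, and invoke the secondary induction. Otherwise, slicing by the $|F|$ pairwise disjoint hyperplanes $\{x_N = a\}$, $a \in F$, yields $|F|$ algebraic sets, each of dimension at most $n-1$ and total degree at most $d$ by Bezout (since $V'$ is contained in no slicing hyperplane). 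The inductive hypothesis applied componentwise gives at most $d|F|^{n-1}$ $F$-points per slice, for a total of $d|F|^n$.

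The main technical obstacle is that hyperplane slices of an irreducible variety need not be irreducible, so the inductive hypothesis must be applied to each component separately. This is routine once one agrees to define the degree of an algebraic set as the sum of the degrees of its irreducible components, after which the bound extends verbatim from varieties to algebraic sets via $|Z(F)| \leq \sum_i |Z_i(F)|$. The only other subtle point is the edge case in the affine induction where $V'$ is contained in every member of a parallel family of hyperplanes, but this is precisely the case triggering the secondary induction on $N$.
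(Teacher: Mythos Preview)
Your argument is correct and is essentially the same induction-on-dimension via hyperplane slicing that the paper uses. The paper streamlines the bookkeeping by working entirely projectively: it picks a pencil of $|F|+1$ hyperplanes through a fixed codimension-$2$ linear subspace (which are precisely your hyperplane at infinity together with your $|F|$ parallel affine slices $\{x_N=a\}$), applies the inductive hypothesis to each slice, and sums---thereby avoiding the separate affine lemma, the secondary induction on $N$, and the inequality $|F|^n+(|F|+1)^{n-1}\le(|F|+1)^n$.
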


We note that Lemma~\ref{size-lem} is essentially the same as the Schwarz-Zippel theorem.

\begin{proof}  The statement is clear for $n=0$.  We may thus suppose inductively that $n>0$ and that $V$ is not contained in any hyperplane of $\P^N$.  Then consider a pencil of hyperplanes through some fixed subspace of $\P^N$ of dimension $N-2$, and write $V_0,V_1,\ldots,V_{|F|+1}$ for the intersection of $V$ with each of the hyperplanes in this pencil.  Each of these is a projective variety (or union of varieties) of dimension $n-1$ and degree $d$.  By induction, $|V_j(F)| \leq d (|F|+1)^{n-1}$ for all $j$, and the claim follows.
\end{proof}

\begin{lemma}[Degree bounds complexity]\label{subdef}  Let $V \subset \A^N$ be an affine variety of degree $d$.  Then there exists a set of polynomials $P_1,\ldots,P_J$ in $\A^N$ with $1 \leq J \leq C_{N,d}$, with degree at most $C_{N,d}$, such that
$$ V = \{ v \in \A^N: P_1(v) = \ldots = P_J(v) = 0 \}.$$
Here $C_{N,d}$ is a constant depending only on $N$ and $d$.
\end{lemma}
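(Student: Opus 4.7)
The plan is to reduce to the projective setting, where we can invoke a boundedness result for varieties of bounded degree, and then handle the number of generators by a dimension count.

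First I would pass to the projective closure $\overline{V} \subset \P^N$ of the affine variety $V$; this is a projective variety of the same dimension and the same degree $d$, and satisfies $\overline{V} \cap \A^N = V$ by construction. If $Q_1, \ldots, Q_J$ are homogeneous forms on $\A^{N+1}$ whose common zero locus in $\P^N$ equals $\overline{V}$, then setting $P_i(x_1, \ldots, x_N) := Q_i(1, x_1, \ldots, x_N)$ produces polynomials on $\A^N$ whose common zero locus is $\overline{V} \cap \A^N = V$, with degrees at most those of the $Q_i$. It therefore suffices to bound the degrees needed to cut out $\overline{V}$ set-theoretically inside $\P^N$.

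The key input is the classical fact that the projective subvarieties of $\P^N$ of degree at most $d$ form a bounded family. Concretely, Castelnuovo--Mumford regularity bounds (going back to Mumford and sharpened in later work of Chardin--Philippon and others) supply an integer $D = D_{N,d}$ such that the homogeneous ideal $I(\overline{V})$ is generated by forms of degree at most $D$; equivalently, only finitely many Hilbert polynomials occur for degree-$\leq d$ subvarieties of $\P^N$, so the relevant locus in the Hilbert scheme decomposes into finitely many components of finite type. Once such a $D$ is in hand, the bound on $J$ is automatic: the subspace of $I(\overline{V})$ consisting of forms of degree at most $D$ sits inside the finite-dimensional vector space of all forms in $N+1$ variables of degree at most $D$, whose dimension is at most $\binom{N+D+1}{D}$. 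A basis of $I(\overline{V})_{\leq D}$ gives the desired $Q_1, \ldots, Q_J$, and we may take $C_{N,d}$ to be any common upper bound on $D$ and $\binom{N+D+1}{D}$.

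The main obstacle is the regularity/boundedness step. I would quote the standard algebraic-geometric result, but if one wants a self-contained proof one can argue by induction on $\dim V$: the zero-dimensional case follows from Lemma \ref{size-lem}, since $|V| \leq d$ and each point can be separated from the others by affine-linear forms of bounded degree; for the inductive step one takes a generic hyperplane section $V \cap H$, which is again of degree at most $d$ and of strictly smaller dimension, applies the inductive hypothesis, and reconstructs defining equations of $V$ itself by varying $H$ through a suitable finite pencil and patching the resulting equations together. The first route (quoting regularity) is cleaner; the second is more elementary but requires some care in the patching step.
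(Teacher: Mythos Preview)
Your proposal is correct and follows essentially the same route as the paper: the paper's proof is simply a citation to Corollary~6.11 of Kleiman's article in SGA6, together with the informal remark that degree-$d$ subvarieties of $\A^N$ form finitely many bounded families on which the defining data is controlled. Your invocation of Castelnuovo--Mumford regularity (or equivalently boundedness of the relevant piece of the Hilbert scheme) is exactly the same underlying input, and your passage through the projective closure plus the dimension count for the number of generators makes explicit what the paper leaves implicit in the citation.
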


\begin{proof}
This is Corollary 6.11 of Kleiman's article~\cite{klei:sga6} in SGA6.  (In concrete terms, one may think of Kleiman's theorem as saying that the $n$-dimensional subvarieties of $\A^N$ of degree $d$ fall into finitely many continuous families, on each one of which the invariants like ``minimal number and degree of polynomials needed to cut out $V$'' is controlled.)
\end{proof}

% This follows from the fact that the degree-$d$, dimension $n$ subvarieties of $\A^N$ are parametrized by a Hilbert scheme of finite type; this is a theorem of Kleiman~\cite{klei:lnm225}.  (In concrete terms, one may think of the theorem as saying that the $n$-dimensional subvarieties of $\A^N$ of degree $d$ fall into finitely many continuous families.)  In particular, the {\em Castelnuovo-Mumford regularity} of $V$ is bounded in terms of $n,d$.  The bound on the regularity implies in particular that $V$ is cut out by polynomials of some bounded degree $C_{N,d}$; the dimension of the space of polynomials of this degree is also bounded in terms of $N$ and $d$, so the result follows.

%{\bf Check this:  should be Cor 6.11 in Kleiman, see also Rossi-Trung-Valla "finiteness of HIlbert functions"}

%\begin{lemma}[Degree bounds complexity, II]\label{complex}  Let $V \subset \A^N$ be an algebraic set (projective or affine) cut out by $J$ polynomials, each of degree at most $d$.  Then $V$ is the union of at most $C_{J,d,N}$ algebraic varieties of degree at most $C_{J,d,N}$ each.
%\end{lemma}

%\begin{proof}
%...
%\end{proof}

Our main algebro-geometric tool is Bezout's theorem:

\begin{lemma}[Bezout's theorem]\label{bezout} 
If $\gamma \subset \P^N$ is an irreducible algebraic curve of degree $d$, and $V$ is a projective or algebraic set of degree $d'$ that does not contain $\gamma$, then $\gamma \cap V$ is a zero-dimensional algebraic variety of degree at most $dd'$; in particular, the cardinality of $\gamma(F) \cap V(F)$, counting multiplicity, is at most $dd'$.
\end{lemma}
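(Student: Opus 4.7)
The argument splits naturally into two parts: first showing $\gamma \cap V$ is zero-dimensional, and then bounding its degree.

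For the dimension claim, observe that since $\gamma$ is irreducible of dimension one and $V$ does not contain $\gamma$, the intersection $\gamma \cap V$ is a proper Zariski-closed subset of the irreducible curve $\gamma$. Any proper closed subset of an irreducible one-dimensional variety is a finite union of points, hence zero-dimensional.

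For the degree bound, I would first dispose of the hypersurface case, then reduce the general case to it by projection. When $V = \{F = 0\}$ is a hypersurface cut out by a homogeneous polynomial $F$ of degree $d'$, pass to the normalization $\nu\colon \tilde\gamma \to \gamma \hookrightarrow \P^N$, where $\tilde\gamma$ is a smooth irreducible projective curve. The pullback line bundle $\nu^*\mathcal{O}_{\P^N}(1)$ has degree $d$ on $\tilde\gamma$ (this is essentially the definition of $\deg \gamma = d$, realized by intersecting with a generic hyperplane). Consequently, $\nu^*\mathcal{O}_{\P^N}(d')$ has degree $dd'$. The polynomial $F$ determines a section of $\mathcal{O}_{\P^N}(d')$, whose pullback is a section of a line bundle of degree $dd'$ on $\tilde\gamma$, nonzero precisely because $V$ does not contain $\gamma$. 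Its zero divisor has degree exactly $dd'$, which dominates the length of the scheme-theoretic intersection $\gamma \cap V$.

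For the general case where $V$ has codimension $c \ge 2$ in $\P^N$, I would reduce to the hypersurface case via a generic linear projection. Choose a linear subspace $L \subset \P^N$ of dimension $c - 2$, defined over $\overline{F}$, and let $\pi_L\colon \P^N \setminus L \to \P^{N-c+1}$ be the projection away from $L$. For generic $L$, the subspace $L$ is disjoint from both $V$ and $\gamma$, the restriction $\pi_L|_V$ is finite and birational onto its image, and $\pi_L(V)$ is a hypersurface of degree exactly $d'$ in $\P^{N-c+1}$. The cone $H := \pi_L^{-1}(\pi_L(V)) \cup L$ is then a hypersurface in $\P^N$ of degree $d'$ containing $V$. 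Again for generic $L$, the curve $\gamma$ is not contained in $H$ — since $\gamma \not\subset V$, a generic line through $L$ meeting $\gamma$ does not meet $V$, which forces $\pi_L(\gamma) \not\subset \pi_L(V)$. Since $\gamma \cap V \subset \gamma \cap H$, the hypersurface case applied to $H$ yields the desired bound $dd'$.

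The main obstacle is the genericity argument in the reduction step, specifically the verification that a generic $L$ simultaneously achieves (i) birationality of $\pi_L|_V$ (so $\deg \pi_L(V) = d'$ and $H$ has degree $d'$ rather than some proper divisor of it), and (ii) $\gamma \not\subset H$. Each of these conditions is an open condition on the Grassmannian of $L$'s, and both are nonempty precisely because of the hypothesis $\gamma \not\subset V$; carrying this out rigorously is essentially the classical content of the Bezout inequality in intersection theory.
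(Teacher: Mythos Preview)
The paper does not actually supply a proof of this lemma; it simply refers the reader to \cite[I.7]{hart:ag} for the details and for a precise definition of multiplicity. So there is no argument in the paper to compare yours against.

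Your proposal is a reasonable and essentially correct self-contained sketch of one of the standard routes to the Bezout inequality. The hypersurface case via normalization and pullback of $\mathcal{O}_{\P^N}(d')$ is exactly how one usually sets it up, and the reduction of the general case to the hypersurface case by coning over a generic projection $\pi_L(V)$ is a clean way to finish. You are right to flag the genericity verification as the place where care is needed; both conditions you list are indeed open and nonempty on the Grassmannian of centers $L$, and the fact that $\gamma \not\subset V$ is precisely what makes condition (ii) nonempty. One small caveat: your reduction step tacitly assumes $V$ is irreducible (or at least equidimensional) of codimension $c$, whereas the lemma as stated allows $V$ to be an arbitrary algebraic set. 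This is harmless, since one can treat the irreducible components of $V$ separately and sum, but it is worth saying explicitly. Given that the paper itself is content to cite Hartshorne, your level of detail is already more than the paper provides.
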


For details, including a precise definition of multiplicity, see \cite[I.7]{hart:ag}.

%{\bf JE:  I changed the statement a bit to remove references to multiplicity; we're only using this to get upper bounds, so the naive count is fine for our purposes. RO: I think that we use the multiplicity at the end of Section \ref{flat-sec}, so I put it back in.  TT: Yeah, we do use multiplicity on page 9.  Should we bother defining the term, or shall we just leave it undefined and refer readers to Hartschorne for details?}

\begin{lemma}[Projection lemma]\label{proj}  Let $\gamma \subset \A^N$ be an irreducible affine curve of degree $d$, and let $T: \A^N \to \A^n$ be a linear map.  Then $T(\gamma)$ is contained in an irreducible affine curve of degree at most $d$.
%$C_{N,n,d}$, where $C_{N,n,d}$ depends only on $N, n, d$.
\end{lemma}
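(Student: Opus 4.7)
The plan is to pass to projective closures and apply Bezout's theorem to the pullback of a generic hyperplane. Since $T|_\gamma \colon \gamma \to \A^n$ is a morphism from an irreducible variety, $T(\gamma)$ is an irreducible constructible subset of $\A^n$, whose Zariski closure $C' \subset \A^n$ is an irreducible affine variety containing $T(\gamma)$. If $\dim C' = 0$, then $C'$ is a single point and $T(\gamma)$ is contained in any affine line through it, which has degree $1 \leq d$. So we may assume $\dim C' = 1$, in which case the task reduces to showing $\deg C' \leq d$. Observe that $T(\gamma)$ misses only finitely many points of $C'$, since a one-dimensional constructible set differs from its closure only in a zero-dimensional set.

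Let $\bar\gamma \subset \P^N$ denote the projective closure of $\gamma$, an irreducible projective curve of degree $d$. The idea is to select a generic affine hyperplane $H = \{\ell = 0\} \subset \A^n$ such that (i) $H$ meets $C'$ in exactly $\deg C'$ distinct points, and (ii) each such intersection point lies in $T(\gamma)$. Condition (i) holds off a codimension-one family of hyperplanes, and (ii) excludes only the finitely many hyperplanes that pass through a point of $C' \setminus T(\gamma)$, so a simultaneous choice is possible. Because $T$ is affine linear, $\ell \circ T$ is an affine linear form on $\A^N$, and $T^{-1}(H) = \{\ell \circ T = 0\}$ is an affine hyperplane whose projective closure $H' \subset \P^N$ is a projective hyperplane; moreover $\bar\gamma \not\subset H'$, since otherwise $T(\gamma) \subseteq H$ and hence $C' \subseteq H$, contradicting the finiteness of $H \cap C'$.

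By Bezout's theorem (Lemma \ref{bezout}), $|\bar\gamma \cap H'| \leq d$. On the other hand, for each of the $\deg C'$ points $p \in H \cap C'$, condition (ii) supplies some preimage $\tilde p \in \gamma$ with $T(\tilde p) = p$; these preimages are pairwise distinct and lie in $\gamma \cap T^{-1}(H) \subseteq \bar\gamma \cap H'$. Therefore $\deg C' \leq |\bar\gamma \cap H'| \leq d$, as desired.

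The only (mild) obstacle is verifying that the two genericity requirements on $H$ can be met simultaneously, which is a routine dimension count on the parameter space of affine hyperplanes in $\A^n$; everything else is a direct appeal to Bezout and the definition of degree via generic hyperplane sections.
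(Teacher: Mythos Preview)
Your proof is correct and follows essentially the same route as the paper's: take the closure of $T(\gamma)$, observe that $T(\gamma)$ misses only finitely many points of it, pick a generic hyperplane in $\A^n$ avoiding those points, and bound the degree of the image curve by counting preimages in $\gamma$ via Bezout. The only cosmetic difference is that the paper compactifies in the fiber direction (working in $\P^{N-n}\times\A^n$ so that the projection becomes proper and the image is automatically closed), whereas you take the Zariski closure directly via constructibility and pass to the full projective closure $\bar\gamma\subset\P^N$ for the Bezout count; both are standard and lead to the same hyperplane-section argument.
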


\begin{proof}  First of all, we can write $\A^N = \A^{N-n} \times \A^n$ in such a way that $T$ is projection onto the second factor.  Then $\A^N$ is an open subvariety of $\P^{N-n} \times \A^n$, and $T$ extends to a projection $\overline{T}$ from $\P^{N-n} \times \A^n$ to $\A^n$.  Write $\overline{\gamma}$ for the closure of $\gamma$ in $\P^{N-n} \times \A^n$; then $\overline{T}(\overline{\gamma})$ is a closed subvariety of $\A^n$, since $\overline{T}$ is a proper morphism.  Now $T(\gamma)$ is an open dense subvariety of $\overline{T}(\gamma)$, so it is the complement of a finite set of points in an affine curve in $\A^n$.  

Let $P$ be a generic hyperplane in $\A^n$; in particular we choose $P$ to avoid the finite set $\overline{T}(\overline{\gamma}) \backslash T(\gamma)$.   Then
\beq
\overline{T}(\overline{\gamma}) \cap P = T(\gamma) \cap P = T(\gamma \cap T^{-1}(P)).
\eeq
Since $P$ doesn't contain $T(\gamma)$, we know that $\gamma$ is not contained in the hyperplane $T^{-1}(P)$; so $\gamma \cap T^{-1}(P)$ is finite of degree at most $d$.  This implies that $\overline{T}(\overline{\gamma})$ has degree at most $d$, as required.

\end{proof}

\end{document}